\newtheorem{lemma}{Lemma}[section]
\newtheorem{theorem}[lemma]{Theorem}
\newtheorem{proposition}[lemma]{Proposition}
\newtheorem{corollary}[lemma]{Corollary}
\theoremstyle{definition}
\newtheorem{remark}[lemma]{Remark}
\newtheorem{example}[lemma]{Example}
\numberwithin{equation}{section}
\def\gq{/\!\! /}
\def\RR{{\mathbb R}}
\def\rank{\operatorname{rank}}
\def\c1{\operatorname{c_1}}
\def\c2{\operatorname{c_2}}
\def\Spec{\operatorname{Spec}}
\def\Cox{\operatorname{\mathcal R}}
\def\CC{{\mathbb C}}
\def\ZZ{{\mathbb Z}}
\def\QQ{{\mathbb Q}}
\def\PP{{\mathbb P}}
\def\L{{  L}}
\def\O{{\mathscr O}}
\def\E{{\mathscr E}}
\def\e{{\mathbf e}}
\def\+{\oplus}                   
\def\*{\otimes}                  
\DeclareMathOperator{\Pic}{Pic}
\DeclareMathOperator{\PsAut}{PsAut}
\def\mov{\operatorname{Mov}}
\def\eff{\operatorname{Eff}}
\def\nef{\operatorname{Nef}}
\def\pr{\operatorname{p}}
\newcommand{\pn}[1]
{ \mathchoice
           { \mathbb P^{1}\times {\mathbb P}^{#1} }
          { \mathbb P^{1}\times {\mathbb P}^{#1} }
              { \mathbb P^{1}\times {\mathbb P}^{#1} }
             { \mathbb P^{1}\times {\mathbb P}^{#1} }
         }
\title{\Large{\bf{Birational geometry of hypersurfaces in products of projective spaces}}}
\author{{{\fontsize{11.5}{1em}\selectfont  John Christian Ottem}}}
\date{}
\begin{document}

\maketitle
\thispagestyle{empty}
\vspace{-0.8cm}

\begin{abstract}
We study the birational properties of hypersurfaces in products of projective spaces. In the case of hypersurfaces in $\PP^m \times \PP^n$, we describe their nef, movable and effective cones and determine when they are Mori dream spaces. Using these results, we give new simple examples of non-Mori dream spaces and analogues of Mumford's example of a strictly nef line bundle which is not ample.
\end{abstract}

\section{Introduction}
Let $X$ be a hypersurface of $\PP^m\times \PP^n$ defined by a bihomogeneous polynomial. If the dimension of $X$ is at least three, the Lefschetz theorem says that the inclusion induces an isomorphism of Picard groups $\Pic(X)\simeq \Pic(\PP^m\times \PP^n)\simeq \ZZ^2$. It is therefore natural to ask how the various cones of divisors of $X$ are related to those of the ambient space $\PP^m\times \PP^n$. In general this relation is not obvious, as examples of Hassett--Lin--Wang \cite{HLW02} and Szendr\~{o}i \cite{Sze03} show that the nef cone of $X$ can be strictly greater than that of the ambient variety. The purpose of this paper is to give a complete picture describing the birational structure of such hypersurfaces. In particular, we compute the cones of effective, movable or nef divisors on $X$ and determine its birational models. In the last section we also consider hypersurfaces in products of more than two projective spaces.

Recall that a normal $\QQ$-factorial projective variety $X$ is a \emph{Mori dream space} if the following three conditions are fulfilled: (i) $\Pic(X)$ is finitely generated; (ii) the nef cone $\nef(X)$ is generated by the classes of finitely many semiample divisors; and 
(iii) there is a finite collection of small $\QQ$-factorial modifications $\phi_i\colon X\dashrightarrow X_i$ such that each $X_i$ satisfies (ii) and the movable cone of $X$ decomposes as $\mov(X)=\bigcup \phi_i^*\nef(X_i)$. 

Mori dream spaces were introduced by Hu and Keel in \cite{HK00} as a class of varieties with good birational geometry properties. For example, the condition of being a Mori dream space is equivalent to having a finitely generated Cox ring \cite[Theorem 2]{HK00} (see section \ref{notations}). Moreover, choosing a presentation for the Cox ring gives an embedding of $X$ into a simplicial toric variety $Y$ such that each of the modifications $\phi_i$ above is induced from a modification of the ambient toric variety $Y$ (see \cite[Proposition 2.11]{HK00}). From this one shows that the Minimal Model Program can be carried out for any divisor and has a combinatorial structure as in the case of toric varieties.

Being a Mori dream space is a relatively strong condition and there are classical examples of varieties that are not. Perhaps the most famous of these is Nagata's counterexample to Hilbert's 14th problem, in which he proves that the blow-up of $\PP^2$ along the base-locus of a general cubic pencil has infinitely many $(-1)$-curves \cite{Nag60}. This blow-up is clearly not a Mori dream space since each of the $(-1)$-curves would require a generator of the Cox ring. The same phenomenon happens for a K3 surface with large Picard number, where one typically expects infinitely many $(-2)$-curves.

There are also other obstructions to being a Mori dream space than the non-polyhedrality of the effective/nef cones. Indeed, there might be integral classes in the boundary of the nef cone which are not semiample. In this paper we will construct concrete examples of this phenomenon even for Picard number 2; In fact `most' hypersurfaces in $\PP^1\times \PP^n$ have an extremal divisor of the nef cone which does not even have an effective multiple, so they are not Mori dream spaces. Thus these hypersurfaces provide simple counterparts to Nagata's examples above.

The following theorem summarizes the geometry of such hypersurfaces:
\begin{theorem}\label{decones}
Let $X$ be a $\QQ$-factorial, normal hypersurface of bidegree $(d,e)$ in $\PP^m\times \PP^n$ of dimension at least three and let $H_i=\pr_i^*\O(1)$. If $m,n\ge 2$, $X$ is a Mori dream space and the Cox ring  is isomorphic to $k[x_0,\ldots,x_m,y_0,\ldots,y_n]/(f)$.  In particular, 
$${\eff(X)}=\mov(X)=\nef(X)=\RR_{\ge 0} H_1+\RR_{\ge 0}H_2.$$

\noindent When $X$ is a general hypersurface in $\PP^1\times \PP^n$, we have the following:

\begin{enumerate}[(i)]
\item If $d= 1$, the second projection realizes $X$ as the blow-up of $\PP^n$ along $\{f_0=f_1=0\}$, and the exceptional divisor is linearly equivalent to $E=eH_2-H_1$, and 
$$
{\eff}(X)=\RR_{\ge 0} H_1+\RR_{\ge 0} E \mbox{ and }\mov(X)=\nef(X)=\RR_{\ge 0}H_1+\RR_{\ge 0}H_2.
$$
\item $1<d< n$.  There is a variety $X^+$ and a small birational modification $\phi:X\dashrightarrow X^+$, which induces a decomposition $\mov(X)=\nef(X)\cup \phi^*\nef(X^+)$. Also, $$
{\eff}(X)=\mov(X)=\RR_{\ge 0} H_1+\RR_{\ge 0} (eH_2-H_1),\mbox{ and }{\nef}(X)=\RR_{\ge 0}H_1+\RR_{\ge 0}H_2.
$$
\item $d=n$. The divisor $eH_2-H_1$ is base-point free and defines a contraction to $\PP^{n-1}$. Also,
$$\eff(X)=\mov(X)=\nef(X)=\RR_{\ge 0} H_1+\RR_{\ge 0} (eH_2-H_1).$$\item If $e=1$, $X$ is a $\PP^{n-1}$-bundle over $\PP^1$.
\end{enumerate}
In these cases $X$ is a Mori dream space and the Cox ring has the following presentation
\begin{equation}\label{coxringpres}
\Cox(X)=k[x_0,x_1,y_0,\ldots,y_n,z_1,\ldots,z_d]/I
\end{equation}where $I=(f_0+x_1z_1,f_1-x_0z_1+x_1z_2,\ldots,f_{d-1}-x_0z_{d-1}+x_1z_d,f_d-x_0z_d)$. 

 For very general hypersurfaces in $\PP^1\times \PP^n$ of degree $(d,e)$ with $d\ge n+1$ and $e\ge 2$ however, $X$ is {\em not} a Mori dream space. Here $$\overline{\eff(X)}=\overline{\mov(X)}=\nef(X)=\RR_{\ge 0} H_1+\RR_{\ge 0} (neH_2-dH_1).$$
but the divisor $neH_2-dH_1$ has no effective multiple.
\end{theorem}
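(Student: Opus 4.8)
The plan is to reduce everything to cohomology on the ambient $\PP^1\times\PP^n$. Since $d\ge n+1$, the $d+1$ general forms $f_0,\dots,f_d$ of degree $e$ have empty common zero locus, so $\pr_2\colon X\to\PP^n$ is finite flat of degree $d$; in particular $X$ is $\QQ$-factorial with $\Pic(X)=\ZZ H_1\oplus\ZZ H_2$, so condition (i) for a Mori dream space holds automatically. Write $C=\RR_{\ge0}H_1+\RR_{\ge0}(neH_2-dH_1)$, whose extremal rays are $\RR_{\ge0}H_1$ and $\RR_{\ge0}(neH_2-dH_1)$ and which consists of the classes $aH_1+bH_2$ with $b\ge0$ and $nea+db\ge0$. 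Using $\nef(X)\subseteq\overline{\eff(X)}$ and $\nef(X)\subseteq\mov(X)\subseteq\eff(X)$, the whole statement follows from three claims: (A) $neH_2-dH_1$ is nef; (B) every effective class $aH_1+bH_2$ satisfies $b\ge0$ and $nea+db\ge0$; (C) $neH_2-dH_1$ has no effective multiple. Indeed (B) gives $\overline{\eff(X)}\subseteq C$, hence $\nef(X)\subseteq C$; (A) and the nefness of $H_1$ give $C\subseteq\nef(X)$; the reverse inclusion $C\subseteq\overline{\eff(X)}$ (needed to upgrade all this to equalities for $\overline{\eff}$, $\overline{\mov}$, $\nef$) comes out of the proof of (B); and (A)+(C) exhibit $neH_2-dH_1$ as an integral nef class on an extremal ray of $\nef(X)$ that is not semiample, so (ii) fails.

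The computational core is a description of sections. Twisting $0\to\O_{\PP^1\times\PP^n}(-d,-e)\to\O\to\O_X\to0$ by $\O(-k,m)$ and using that line bundles of negative $\PP^1$-degree have no $H^0$, one gets for $k\ge1$ an isomorphism $H^0(X,mH_2-kH_1)\cong\ker\bigl(H^1(\PP^1\times\PP^n,\O(-k-d,m-e))\xrightarrow{\ \cdot f\ }H^1(\PP^1\times\PP^n,\O(-k,m))\bigr)$. Representing $H^1(\PP^1,\O(-a))$ by the Laurent monomials $x_0^{-i}x_1^{-(a-i)}$, $1\le i\le a-1$, makes this explicit: $H^0(X,mH_2-kH_1)$ is the space of tuples $(g_1,\dots,g_{k+d-1})$ of degree $m-e$ forms on $\PP^n$ with $\sum_{j=0}^d f_j\,g_{p+d-j}=0$ for $p=1,\dots,k-1$ (and $g_i=0$ off the index range). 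Equivalently $H^0(X,mH_2-kH_1)=H^0(\PP^n,\mathcal G_k(m))=\ker\Phi_*$, where $\mathcal G_k=\ker\bigl(\O(-e)^{\oplus(k+d-1)}\xrightarrow{\Phi}\O^{\oplus(k-1)}\bigr)$ is the kernel of the Sylvester-type matrix of the $f_j$ and $\Phi_*\colon H^0(\O(m-e))^{\oplus(k+d-1)}\to H^0(\O(m))^{\oplus(k-1)}$ is the induced map. The cokernel of $\Phi$ computes $R^1\pr_{2*}\O_X(-kH_1)$, which vanishes because $\pr_2$ is finite, so $\mathcal G_k$ is a rank $d$ bundle with $c_1(\mathcal G_k)=-(k+d-1)eH$ and $H^{\ge2}(\mathcal G_k(m))=0$; hence $\chi(X,mH_2-kH_1)=\chi(\mathcal G_k(m))=(k+d-1)\binom{m-e+n}{n}-(k-1)\binom{m+n}{n}$. (Dually, $\mathcal E_m:=\pr_{1*}\O_X(mH_2)$ is a bundle on $\PP^1$ of rank $\binom{m+n}{n}-\binom{m-e+n}{n}$ and degree $d\binom{m-e+n}{n}$, and $mH_2-kH_1$ is effective iff $\mathcal E_m$ has a summand $\O_{\PP^1}(a)$ with $a\ge k-1$.)

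Now for (B) and (C). If $b<0$ then $\pr_{1*}\O_X(aH_1+bH_2)=0$ since every fibre of $\pr_1$ is a positive‑dimensional hypersurface, so no effective class has $b<0$. For $a=-k<0$, $m:=b>0$, we may assume $m\ge e$, and then an elementary (if slightly delicate, the small‑$e$ range being tight) comparison of binomial coefficients, using $d\ge n+1$, $e\ge2$, $n\ge3$, shows $\chi(\mathcal G_k(m))\le0$ whenever $dm\le nek$. Since $\chi(\mathcal G_k(m))>0$ forces $h^0>0$, this gives $mH_2-kH_1$ effective \emph{for every} $X$ as soon as $dm>nek$; taking a sequence of pairs with $dm>nek$ and $m/k\to ne/d$ produces effective classes of slope tending to $ne/d$, proving $C\subseteq\overline{\eff(X)}$ and the claimed inclusions for $\overline{\mov}$ and $\nef$. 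It remains to show that for \emph{very general} $X$ one has $h^0(X,mH_2-kH_1)=\dim\ker\Phi_*=0$ whenever $dm<nek$ (with $m\ge e$), and whenever $(k,m)=(\ell d,\ell ne)$. Since $f\mapsto\dim\ker\Phi_*$ is upper semicontinuous and there are countably many pairs $(k,m)$, it suffices to exhibit, for each such pair, one $f$ with $\Phi_*$ injective; the very general $X$ then attains the minimum $0$ simultaneously, and with $\chi\le0$ this forces $h^0=0$. \textbf{This last point is the main obstacle}: one must prove that the generic Sylvester matrix $\Phi_*$ attains maximal rank on and below the ray $\RR_{\ge0}(neH_2-dH_1)$ — precisely the range in which the ``expected'' Koszul‑type kernel elements no longer fit. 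I expect to handle it by a monomial (toric) degeneration of the $f_j$, or by induction on $n$ via restriction to a hyperplane, using $0\to\mathcal G_k(m-1)\to\mathcal G_k(m)\to\mathcal G_k(m)|_{\PP^{n-1}}\to0$ with a careful treatment of base cases.

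Finally, for (A) I argue on curves. If $H_1\cdot C=0$ then $C$ lies in a fibre of $\pr_1$, and since $\pr_2$ is finite $H_2\cdot C>0$, so $(neH_2-dH_1)\cdot C>0$. If $H_1\cdot C=r\ge1$, put $s=H_2\cdot C$, $C'=\pr_2(C)$, $t=\deg(\pr_2|_C\colon C\to C')$; pulling back the ``universal root'' divisor of bidegree $(1,d)$ from $\PP^d\times\PP^1$ to the ruled surface $\PP^1\times\widetilde{C'}$ and comparing with the class of $C$ there yields $r\le es/t$, which already gives $nes\ge dr$ once $t\ge d/n$. The remaining (small $t$) cases — in particular $C$ the graph of a degree $r$ map $C'\to\PP^1$ lying on $X$ — are excluded for $r>nes/d$ by a dimension count of the space of such maps against the $dr+es+1$ linear conditions of ``lying on the very general hypersurface $X$'' (a Clemens–Ein–Voisin type argument, where $d\ge n+1$, $e\ge2$ make the numerics work). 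Hence $neH_2-dH_1$ is nef; one checks $(neH_2-dH_1)^n=0$, so it is nef but not big, and by (C) not semiample. Combining (A), (B), (C) gives $\overline{\eff(X)}=\overline{\mov(X)}=\nef(X)=C$ with $neH_2-dH_1$ having no effective multiple, so $X$ is not a Mori dream space.
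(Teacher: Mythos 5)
Your proposal addresses only the final clause of the theorem (the non--Mori-dream-space range $d\ge n+1$, $e\ge 2$): the case $m,n\ge 2$, the four cases (i)--(iv) for small $d$ or $e=1$, and the Cox ring presentation \eqref{coxringpres} are not touched, so at most this is a proof of one part of the statement. Within that part the skeleton (A)+(B)+(C) is logically sound, and your reduction of the pseudoeffectivity of $neH_2-dH_1$ to positivity of an Euler characteristic is essentially the paper's Lemma \ref{subconesss} rewritten in Sylvester-matrix language.

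The genuine gap is exactly the step you flag yourself as ``the main obstacle'': the claim that for very general $f$ the map $\Phi_*$ is injective on and below the ray $\RR_{\ge 0}(neH_2-dH_1)$, i.e.\ that $h^0(X,m(neH_2-dH_1))=0$ for all $m>0$. This is not a routine verification to be deferred to ``a monomial degeneration or induction on $n$'': it is a generic-maximal-rank problem in precisely the range where the Euler characteristic vanishes to leading order, and it is here that the paper injects its one essential idea --- exhibiting a single special hypersurface $X_0$ as the birational image of $\PP(\E)$ for a semistable rank-$n$, degree-$d$ bundle $\E$ on an elliptic curve $C$, where the required vanishing reduces to the fact that a semistable bundle of degree zero twisted by a non-torsion point of $\Pic^0(C)$ has no sections (Lemmas \ref{noeffectivemultiple} and \ref{nosections}). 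There is concrete reason to doubt that an explicit combinatorial witness exists: the statement is sensitive to the ground field (over $\overline{\mathbb F_p}$ the elliptic-curve witness fails because every degree-zero line bundle is torsion, which is why the paper records Keel's question as open), whereas a monomial degeneration would be blind to such arithmetic. The same criticism applies to your step (A): nefness of $neH_2-dH_1$ is left to an uncarried-out dimension count over all curve classes, while in the paper it falls out of the same special member $X_0$ (the class pulls back to a nef class on $\PP(\E)$ under a finite surjective morphism, and one passes to the very general deformation by \cite[Lemma 4.1]{Mou12}). As written, the proposal reduces the hardest part of the theorem to an unproven maximal-rank conjecture rather than proving it.
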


Note that Mori dream hypersurfaces in $\PP^1\times \PP^n$ have bidegrees $(d,e)$ lying in the L-shaped region given by $\{1\le d\le n \mbox{ or } e=1\}$. Hence it is essentially the value of $d$, rather than the anticanonical divisor, that determines whether a general hypersurface of degree $(d,e)$ is a Mori dream space or not. In particular, it is not true that a sufficiently ample hypersurface in a Mori dream space is again a Mori dream space.

In this case that there are only a few bidegrees $(d,e)$ where $X$ is Fano, in which case it is well-known that $X$ is a Mori dream space. The case $(2,n+1)$ corresponds to a Calabi-Yau manifold. On the other hand, for general type varieties, when $K_X$ is ample, most hypersurfaces are not Mori dream spaces, although there are some that are (e.g., bidegree $(n,n+1)$ is a Mori dream space, $(n+1,n+1)$ is not). 

It is also interesting to note that all the cones involved are rational polyhedral for any bidegree.

\section*{Relation to an example of Mumford}
There is a corresponding result for surfaces in $\PP^1\times \PP^2$, but this requires a slightly modified argument, as the Picard group of $X$ might be larger than that of the ambient space. Nevertheless, using the Noether-Lefschetz theorem, we prove the following analogue of Theorem \ref{decones} for surfaces:

\begin{proposition}\label{desurfaces}
Let $X$ be a very general surface in $\PP^1\times \PP^2$ of bidegree $(d,e)$. 
\begin{enumerate}[(i)]
\item If $d=1$, $X$ is the blow-up of $\PP^2$ along the intersection of two very general degree $e$ curves. It is a Mori dream space if and only if $e\le 2$, in which case $X$ is a del Pezzo surface. In the other case, $X$ is a rational surface with infinitely many $(-1)$-curves.
\item If $d=2$, $X$ is a double cover of $\PP^2$ branched along a smooth curve of degree $2e$ and is always a Mori dream space.
\item If $e=1$, $X$ is a Hirzebruch surface. 
\item If $d\ge 3$ and $e\ge 2$, then the effective cone of $X$ is not closed. Hence $X$ is not a Mori dream space.
\end{enumerate}
\end{proposition}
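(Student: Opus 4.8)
The plan is to treat the four cases separately, realizing $X$ in each case through one of its two projections $\pr_1,\pr_2$ and then either identifying $X$ with a familiar surface or analysing the relevant cone directly. The one structural difference from the setting of Theorem \ref{decones} is that for a surface the Lefschetz theorem no longer pins down $\Pic(X)$; where $\rho(X)=2$ is needed I would invoke the Noether--Lefschetz theorem for very general surfaces in $\PP^1\times\PP^2$. The routine cases come first. If $e=1$, the fibres of $\pr_1\colon X\to\PP^1$ have bidegree $(0,1)$, i.e.\ lines in $\PP^2$, so $\pr_1$ is a $\PP^1$-bundle and $X$ is a Hirzebruch surface, which is toric. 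If $d=1$, writing $F=x_0f_0+x_1f_1$, the projection $\pr_2$ realizes $X$ as $\Bl_Z\PP^2$ with $Z=\{f_0=f_1=0\}$ a reduced set of $e^2$ points for general $f_i$, the total exceptional divisor having class $eH_2-H_1$; for $e\le 2$ these are at most $4$ points in general position and $X$ is a del Pezzo surface. If $d=2$, the projection $\pr_2$ realizes $X$ as a double cover $\pi\colon X\to\PP^2$ branched along the $x$-discriminant of $F$, which is a smooth curve of degree $2e$ for general $F$; for $e=2$ this is a del Pezzo surface of degree $2$. Since del Pezzo surfaces and toric varieties are Mori dream spaces, this settles (iii) and the stated sub-cases of (i) and (ii).

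For (ii) with $e\ge 3$ I would use the covering involution $\iota$ of $\pi$. Since $\iota^*H_2=H_2$ and $\iota$ preserves the intersection pairing, comparing with $H_1$ gives $H_1+\iota^*H_1=eH_2$, hence $\iota^*H_1=eH_2-H_1$; thus $H_1$ and $eH_2-H_1$ are both nef, in fact semiample (they are the fibre classes of $\pr_1$ and of $\pr_1\circ\iota$), and one checks $(eH_2-H_1)^2=0$. For a very general $X$ the Noether--Lefschetz theorem gives $\Pic(X)=\ZZ H_1\oplus\ZZ H_2$, and then $\iota$ must interchange the two extremal rays of $\overline{\eff}(X)$ (it does not fix the $H_1$-ray), forcing $\overline{\eff}(X)=\nef(X)=\mov(X)=\RR_{\ge 0}H_1+\RR_{\ge 0}(eH_2-H_1)$. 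This cone is rational polyhedral and generated by semiample classes, so $X$ is a Mori dream space; when $e=3$, $X$ is a K3 surface carrying two genus-one fibrations and having finite automorphism group.

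For (i) with $e\ge 3$ I would show $X=\Bl_Z\PP^2$, $|Z|=e^2\ge 9$, carries infinitely many $(-1)$-curves; each gives an extremal ray of $\eff(X)$, so infinitely many of them contradict finite generation of $\Cox(X)$, and $X$ is not a Mori dream space. When $e=3$, $Z$ is the base locus of a very general pencil of plane cubics, so $X$ is a rational elliptic surface with only irreducible fibres; its Mordell--Weil group is then the $E_8$ lattice, of rank $8$, and the corresponding infinitely many sections are $(-1)$-curves. For $e\ge 4$ I would reduce to this by degenerating the two degree-$e$ curves to products sharing a common cubic factor, so that $Z$ specialises to the nine base points of a cubic pencil together with extra points in general position; choosing these extra points to avoid the countably many $(-1)$-curves of the elliptic surface keeps those $(-1)$-curves present on the special member, after which one argues that the relevant classes remain effective and irreducible on the very general member of the family. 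This last transfer is the delicate point of case (i).

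For (iv), set $D=2eH_2-dH_1$, so $D^2=0$, $D\cdot H_1=2e^2>0$, $D\cdot H_2=ed>0$; I would follow the strategy of the final part of Theorem \ref{decones}. Pushing forward the Koszul sequence of $X\subset\PP^1\times\PP^2$ twisted by $2meH_2$ and using $H^1(\O_{\PP^2}(me))=0$, one obtains a short exact sequence on $\PP^1$ exhibiting $\pr_{1*}\O_X(2meH_2)=\bigoplus_j\O_{\PP^1}(\lambda^{(m)}_j)$ as the cokernel of the morphism of bundles given fibrewise by multiplication by $F$; consequently $h^0(X,mD+kH_1)=\sum_j\max\{0,\lambda^{(m)}_j-md+k+1\}$ for $k\in\ZZ$. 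The heart of the matter is that for very general $X$ this bundle is as balanced as possible, which I would establish by a semicontinuity argument, degenerating $F$ so as to make the multiplication morphism, and hence its cokernel, transparent. Granting balancedness, a Riemann--Roch computation of the rank and degree of $\pr_{1*}\O_X(2meH_2)$ shows $\max_j\lambda^{(m)}_j\le md-1$ for all $m$ — this is exactly where $d\ge 3$ is used, the borderline $d=2$ giving equality, consistently with $D$ being semiample there — so $h^0(X,mD)=0$ for every $m\ge 1$; the same computation shows $\tfrac1m\max_j\lambda^{(m)}_j\to d$, so $D+(d-\tfrac1m\max_j\lambda^{(m)}_j)H_1$ is effective with coefficient tending to $0$, i.e.\ $D\in\overline{\eff}(X)$. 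Hence $D\in\overline{\eff}(X)\setminus\eff(X)$, so $\eff(X)$ is not closed; since a Mori dream space has rational polyhedral, hence closed, effective cone, $X$ is not a Mori dream space. The main obstacle throughout is the balancedness claim in (iv), which is precisely the technical input reused from the proof of Theorem \ref{decones}.
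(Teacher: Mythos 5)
Parts (i)--(iii) of your argument match the paper's: the Hirzebruch and del Pezzo identifications, and, for $d=2$, $e\ge 3$, the covering involution $\iota$ with $\iota^*H_1=eH_2-H_1$ combined with the Noether--Lefschetz theorem of Ravindra--Srinivas to get $\overline{\eff}(X)=\nef(X)=\RR_{\ge 0}H_1+\RR_{\ge0}(eH_2-H_1)$, are exactly what the paper does in Sections 3.3 and 4. For $d=1$, $e\ge 4$ the paper merely quotes the infinitude of $(-1)$-curves as known, so your degeneration sketch addresses something the paper also leaves unproved; it is not where the substance of the proposition lies.

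The genuine gap is in (iv). Your reduction of $h^0(X,mD)$ to the splitting type of $\E_m=\pr_{1*}\O_X(2meH_2)$ is correct, but the claim that $\E_m$ is ``as balanced as possible'' for very general $X$ \emph{is} the theorem rather than a tool for proving it: $\max_j\lambda_j^{(m)}\le md-1$ is literally equivalent to $h^0(X,mD)=0$. Your proposed mechanism cannot deliver it. Semicontinuity transfers vanishing \emph{from} a special member \emph{to} the general one, so a degeneration of $F$ is only useful if the degenerate hypersurface already satisfies $h^0(mD)=0$; making the multiplication map ``transparent'' by specializing $F$ pushes $h^0$ up, not down. Worse, the balancedness heuristic is demonstrably false in the adjacent case $d=2$: there $mD=2m(eH_2-H_1)=\iota^*(2mH_1)$ is effective with $h^0(X,mD)=2m+1$, while the rank/degree computation for $\E_m$ predicts, under balancedness, $h^0(X,mD)=2$ for $e=3$ and $0$ for $e\ge 4$ (so your own consistency check at $d=2$ fails for $e\ge3$). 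Balancedness is therefore not a property one can expect to hold generically without a concrete geometric input, and none is supplied.

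The missing idea is the paper's Section 5 construction: a single special hypersurface $X_0$ of bidegree $(d,e)$ realized as the birational image of $Y=\PP(\E)$, where $\E$ is a semistable rank-$2$ bundle of degree $d$ on an elliptic curve $C$ and $f=p\circ\pi$ for a degree-$e$ map $p:C\to\PP^1$. There $\pi_*\O_Y(m(2eL_2-dL_1))=S^{2em}\E\otimes p^*\O_{\PP^1}(-dm)$ is semistable of degree $0$, hence has no sections once the degree-$0$ twist is non-torsion in $\Pic^0(C)$ (Lemmas \ref{noeffectivemultiple} and \ref{nosections}); this is the member with $h^0=0$ that semicontinuity needs, and it is where ``very general'' genuinely enters. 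The paper's closing remark on Keel's question indicates that some such transcendental input is unavoidable: over $\overline{\mathbb{F}_p}$ every degree-$0$ line bundle on $C$ is torsion, the argument collapses, and the statement itself may fail --- which a characteristic-free ``generic balancedness by degeneration'' argument of the kind you sketch could not accommodate.
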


Proposition \ref{desurfaces} gives a simple example of a line bundle $L$ which has positive intersection with every curve $C$, but is not ample. More precisely, a very general surface of bidegree $(3,3)$ in $\PP^1\times \PP^2$ has the line bundle $L=\O(2H_2-H_1)$ which satisfies this condition. Of course here $L$ has top-self-intersection 0. In section 5 we also construct higher dimensional analogues of this. Examples of line bundles with such properties were first constructed by Mumford using certain projective bundles $\PP(\E)$ over curves of genus $\ge 2$. Our geometric construction in Theorem \ref{decones} was inspired by Mumford's example, but we use only projective bundles over elliptic curves.

\section*{The Lefschetz theorem for Mori dream spaces} 
Let $X$ be a $\QQ$-factorial, normal hypersurface of $\PP^m\times \PP^n$ where $m,n\ge 2$. In this case it is easy to show that $X$ is a Mori dream space, by computing the Cox ring of $X$ directly. Indeed, let $D$ be any divisor on $\PP^m\times \PP^n$ and consider the sequence
$$
0\to H^0(\PP^m\times \PP^n,\O(D-X))\to H^0(\PP^m\times \PP^n,\O(D))\to H^0(X,\O_X(D))\to 0
$$This is exact on the right because $H^1(\PP^m\times \PP^n,L)=0$ for any line bundle $L$ when $m,n\ge 2$. This shows that the Cox ring of $X$ is a quotient of that of $\PP^m\times \PP^n$: $$\Cox(X)= k[x_0,\ldots,x_m,y_0,\ldots,y_n]/(f),$$ where the grading is $\deg x_i=H_1$ and $\deg y_j=H_2$. Moreover, the only contractions of $X$ are given by the two projections. 

In particular, this means that the questions mentioned in the introduction are only interesting for hypersurfaces in $\PP^1\times \PP^n$. Here the situation becomes more complicated because the presence of higher cohomology makes it difficult to compute $H^0(X,\O_X(D))$. In particular, we will see that the Cox ring of $X$ is {\em not} a quotient  of that of $\PP^1\times \PP^n$ in these cases.

\bigskip

In general it is an interesting question when a sufficiently ample hypersurface in a Mori dream space is again a Mori dream space. This is not always the case, even for arbitrarily ample hypersurfaces, as shown by Theorem \ref{decones}. In the positive direction, Hausen \cite{Hau08}, Jow \cite{Jow11} and Artebani-Laface \cite{AL12} give criteria for when the Cox ring of the hypersurface is a quotient of that of the ambient variety. A necessary condition for this to hold is that $X$ and $Y$ have isomorphic Picard groups. In \cite{Jow11}, Jow proves that $\Cox(X)\simeq \Cox(Y)/(f)$ for any smooth ample divisor $X$ on $Y$, provided $Y$ is smooth of dimension $\ge 4$, and $I_{\text{irr}}(Y)$ has codimension at least 3 in $\Cox(Y)$. Here $I_{\text{irr}}(Y)$ denotes the so-called \emph{irrelevant ideal} of $Y$, which describes the unstable locus of the action of the Picard torus on $\Cox(Y)$. In the case $Y=\PP^m\times \PP^n$, the irrelevant ideal is given by $I_{\text{irr}}=(x_0,\ldots,x_m)\cap (y_0\ldots,y_n)$, which has codimension at least $3$ if and only if $m,n\ge 2$. Generalizing Jow's result, Artebani and Laface show that the above conclusion holds also under considerably weaker assumptions on $Y$ provided $X$ is ample and general in its linear system  \cite{AL12}.

\section*{Notation}\label{notations} Throughout the paper we will be working over an uncountable algebraically closed field of characteristic 0. 
The main reason for this is that some of the arguments used in section 5 requires working with \emph{very general} hypersurfaces over {\em general} ones. Here the latter will refer to the hypersurface being chosen outside a finite union of closed algebraic subsets of the parameter space, whereas `very general' means outside a countable union of closed algebraic subsets.

 We will also use properties of semistable vector bundles (such as the fact that a symmetric power of a semistable vector bundle is again semistable),  which are known to be false in positive characteristic.  On the other hand it is likely that many of the results in section 2 and 3 can be extended to positive characteristic using the Grothendieck--Lefschetz theorem, which is known to hold in all characteristics.

We let $N^1(X)$ denote the N\'eron-Severi group of $X$, i.e., the $\RR$-vector space of divisors modulo numerical equivalence. For most of the varieties in this paper numerical and linear equivalence coincide, so that $N^1(X)=\Pic(X)\otimes \RR$. Inside $N^1(X)$ we define the effective cone $\eff(X)$ to be the cone of all effective divisors. Similarly, we denote by $\nef(X)$ (resp. $\mov(X)$) the cone of nef divisors (resp. movable divisors). Here we call a divisor $D$ \emph{nef} (resp. \emph{movable}) if $D\cdot C\ge 0$ for every curve $C$ (resp. if the linear system $|mD|$ has no fixed components for $m>0$ sufficiently large). Note that the nef cone is always closed, whereas the other two need not be. 

When $\Pic(X)$ is a free abelian group, the Cox ring of $X$ is defined as the ring $$\Cox(X)=\bigoplus_{\mathbf m\in \ZZ^\rho}H^0(X,\O_X(m_1D_1+\ldots+m_\rho D_\rho))$$ for a chosen basis $D_1,\ldots,D_\rho$ for $\Pic(X)$. As usual, we consider this ring with its $\Pic(X)$-grading. (The Cox ring can more generally be defined using the class group as in \cite{CoxBook}, but this is not necessary for the varieties considered in this paper).

\subsection*{Acknowledgements}Thanks to Burt Totaro for his advice and encouragement. Also thanks to Laurent Gruson, Antonio Laface, Victor Lozovanu, Christian Peskine, Diane MacLagan and  Kenji Oguiso for useful discussions and comments. After this paper was written, we learned that some of the hypersurfaces in Theorem \ref{decones} were considered by Ito in \cite{Ito13}, who shows that they are Mori dream spaces using a different argument.

\section{Mori dream hypersurfaces in $\PP^1\times \PP^n$}

Consider a hypersurface $X\subset\pn{n}$ defined by a bihomogeneous form
\begin{equation}\label{deform}
f=x_0^df_0+x_0^{d-1}x_1f_1+\ldots+x_1^df_d=0
\end{equation}where $x_0,x_1$ coordinates on $\PP^1$ and the $f_i$ are homogenous forms of degree $e$ in the coordinates $y_0,\ldots,y_n$ on $\PP^n$. We will let $\O_{\pn{n}}(a,b)$ denote the line bundle $\pr_1^*\O(a)\otimes \pr_2^*\O(b)$ on $\pn{n}$. We will in this section assume that $n\ge 3$ and that $1<d\le n$. For the remaining cases, see sections 3 and 4. We will also assume that $X$ is general in the sense that it is smooth and that the $f_i$ generate a regular sequence. (However many of the arguments go through under weaker assumptions, e.g., normal and $\QQ$-factorial). In this case, we have by the Grothendieck--Lefschetz theorem that $\Pic(X)= \ZZ H_1\+\ZZ H_2$ where $H_1=\O_{\pn{n}}(1,0)|_X$ and $H_2=\O_{\pn{n}}(0,1)|_X$.

\def\AA{\mathbb A}

The hypersurface $X$ admits an interesting birational map which can be seen if we write $f$ as the determinant of the {\em companion matrix}
\begin{equation}\label{detM}
A=\begin{pmatrix} 
x_1 & 0 &\cdots & 0 &  f_0\\
-x_0 & x_1  & \ddots & \vdots & f_1 \\
0 & \ddots & \ddots &0   & \vdots\\
   & \ddots & -x_0  & x_1 &f_{d-1}\\
0   & \cdots & 0  & -x_0 & f_d\\
\end{pmatrix}
\end{equation}Let $Y\subset \AA=\AA^2\times \AA^{n+1}$ be the affine hypersurface defined by $f=\det A$. Note that if there is a $z=(z_1,\ldots,z_d,1)\in \CC^{d+1}$ with $A\cdot z^t=0$, then also $B\cdot (x_0,x_1,1)^t=0$ where

\begin{equation}\label{yzmatrix}
B=\begin{pmatrix} 
0 & z_1 & f_0\\
-z_1 & z_2 & f_1\\
\vdots & \vdots & \vdots\\
-z_{d-1} & z_d & f_{d-1}\\
-z_d & 0 & f_d\\
\end{pmatrix}
\end{equation}Let $Y^+$ be the subvariety of $\AA^{d}\times \AA^{n+1}$ defined by the maximal minors of $B$. 

Note that for fixed $(x_0,x_1)\in \AA^2-0$, the kernel of $A$ is at most 1-dimensional. Hence we get a well-defined rational map $\psi: Y\dashrightarrow Y^+$ by defining $$\psi(x_0,x_1,y_0,\ldots,y_n)=(z_1,\ldots,z_d,y_0,\ldots,y_n).$$Similarly, given $(z_1,\ldots,z_d,y_0,\ldots,y_n)\in Y^+$ with at least one $z_i$ non-zero, the matrix $B$ also has a kernel which is 1-dimensional, giving a well-defined inverse of $\psi$. So the map $\psi$ is birational.

Everything here is compatible with the various $\CC^*$-actions, so we get a birational map $\phi:X\dashrightarrow X^+$, where $X^+\subset \PP^{d-1}\times \PP^n$ is defined by the minors of $B$. 

The map $\phi$ is a morphism outside the locus where $f_0=\cdots=f_d=0$ in $X$. Indeed, in this case at least one $z_i$ is non-zero, so the corresponding point in $\PP^{d-1}$ is well-defined.  The corresponding statement also holds for $\phi^{-1}$. In particular, when the $f_i$ form a regular sequence, $\phi$ is an isomorphism in codimension $d$ (thus an isomorphism for $d=n$). 

The variety $X^+$ will usually be singular, even for $f_0,\ldots,f_d$ general, but it will follow from the computation below and \cite[Proposition 1.11]{HK00} that the singularities are $\QQ$-factorial and terminal when $X$ is smooth.

To distinguish between $X$ and $X^+$ we use $\O_{X^+}(1,0)$ and $\O_{X^+}(0,1)$ to denote the line bundles on $X^+$ coming from the two projections. From the construction of $\phi$ we have that $$\phi^*(\O_{X^+}(1,0))=eH_2-H_1 \mbox{ and }\phi^*(\O_{X^+}(0,1))=H_2$$ Here the line bundle $\O_{\PP^{d-1}\times \PP^n}(1,0)$ is not big on $X^+$, since it gives the contraction to the lower-dimensional variety $\PP^{d-1}$ (here we are using  $d\le n$). It follows that $eH_2-H_1$ is in the boundary of the effective cone on $X$. From this, we see that the movable cone decomposes as $\nef(X)\cup \nef(X^+)$.  In particular, $X$ is a Mori dream space. 

This decomposition is illustrated in the figure below in the case $2\le d<n$.

\begin{figure}[h!]
  \centering
    \includegraphics[width=0.3\textwidth]{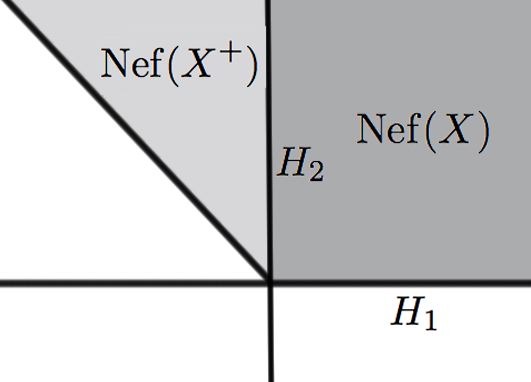}
\label{weight}\label{decompic}
\end{figure}

When $d=n$, $\phi$ is an isomorphism, so the three cones are equal to $\RR_{\ge 0}H_1+\RR_{\ge 0}(eH_2-H_1)$.  The variety $X^+$ and $\phi$ still make sense for $d>n$, but we can not conclude that the effective cone decomposes as above, since $eH_2-H_1$ {\em is} big in this case (cf. section 5).

\bigskip

To compute the Cox ring, we proceed as in \cite{Ott13}. We will assume that $X$ is general (so in particular, $X$ is smooth, and the polynomials $f_0,\ldots,f_d$ form a regular sequence).

We will use induction to write any element of $\Cox(X)$ as a polynomial in the above sections $x_i,y_j,z_k$. Let $D$ be an effective divisor on $X$ and let $L$ be a base-point free line bundle. By a result of Mumford \cite{mumford}, the multiplication map 
$$
H^0(X,\O_X(D-L))\otimes H^0(X,\O_X(L))\to H^0(X,\O_X(D))
$$is surjective provided $H^i(X,\O_X(D-iL))=H^{i}(X,\O_X(D-(i+1)L))=0$ for $i=1,\ldots,m-1$, where $m=h^0(X,\O_X(L))$. Mumford states the result with the assumption that the latter cohomology groups vanish for all $i>0$, but the proof shows that this is not necessary. (In fact, one can give a quick proof of this result by writing out the Koszul complex of $m$ sections generating $H^0(X,\O_X(L))$ and taking its cohomology). Now, if this map is surjective, we see that sections of $\O_X(D)$ are generated by products of sections coming from $\O_X(L)$ and $\O_X(D-L)$, so by induction it follows that $x_i,y_j,z_k$ generate the ring.

To show that the above multiplication map is surjective, suppose $D=aH_1+bH_2$ is an effective line bundle on $X$. If $a\ge 1,b\ge 0$, we may take $L=H_1$ above and note that $H^1(X,\O_X(D-H_1))=0$. When $a=0$, we use instead $L=H_2$ and find that $H^i(X,bH_2)=0$ for all $i=1,\ldots,n-1$ and any $b\ge 0$: This follows by taking cohomology of the exact sequence
$$
0\to \O_{\pn{n}}(-d,b-e)\to \O_{\pn{n}}(0,b)\to \O_{X}(bH_2)\to 0.
$$Similarly, for $a\ge 1,b\ge 0$, the multiplication map $$
H^0(X^+,\O_{X^+}(a-1,b))\otimes H^0(X^+,\O_{X^+}(1,0))\to H^0(X^+,\O_{X^+}(a,b))
$$is surjective. That the above cohomology groups vanish, follows by resolving $\O_{X^+}$ as an $\O_{\PP^{d-1}\times \PP^{n}}$-module, using the Eagon--Northcott complex of $B$ \cite[Appendix B]{Laz04}. It follows that $H^0(X^+,\O_{X^+}(a,b))$ and hence $H^0(X,\O_X(a(eH_2-H_1)+bH_2))$ is spanned by polynomials in $x_i,y_j,z_k$. In all, this means that the Cox ring of $X$ is generated by the sections $x_i,y_j,z_k$. 

Let now $R$ denote the polynomial ring on the right hand side of \eqref{coxringpres}. When the $f_0,\ldots,f_d$ are general, $I$ is a complete intersection and a prime ideal (e.g., it is true for $f_i=y_i^d$ and these are open conditions). In particular, both $R/I$ and $\Cox(X)$ are integral domains. By \cite[Proposition 2.9]{HK00}, the Krull dimension of $\Cox(X)$ equals $\rank \Pic(X)+\dim X=n+2$. Similarly, since $I$ is a complete intersection, the Krull dimension of $R/I$ is $(2+n+1+d)-(d+1)=n+2$. If follows that the surjection $R/I \to \Cox(X)$ is in fact an isomorphism. This completes the proof of the part about $\Cox(X)$ in Theorem \ref{decones}.

\begin{remark}The above birational map $\phi$ has the following interpretation in terms of geometric invariant theory. Consider the $\ZZ^2$-graded ring $R=k[x_0,x_1,y_0,\ldots,y_n,z_1,\ldots,z_d]$ where the grading of the variables is given by columns of the matrix

$$
\left[
\begin{array}{rrrrrrrr}
1 & 1 & 0 & \cdots & 0 & -1 & \cdots &-1\\
0 & 0 & 1 & \cdots & 1 & e & \cdots &e\\
\end{array}
\right]
$$The torus $G=(\CC^*)^2$ acts on $\AA=\Spec R$ via these weights. We wish to study the various GIT quotients $\AA\gq G$. To do this, we consider the trivial line bundle $\L\to \AA$, with coordinate $t$, defined by the embedding $R\subset R[t]$. We extend the action of $G$ to $\L$ by choosing a character $\chi: G\to \CC^*$, and defining for $g\in G$, $g^*(t)=\chi(g)^{-1}\cdot t$.  As shown in \cite{HK00}, the set of semistable points of the action is $\AA-V(B_\chi)$ where $B_\chi$ is the irrelevant ideal of $R$, defined as the radical of the ideal generated by the subring of $R$ with degrees multiples of $\chi$. This defines the GIT quotient of $\AA$ by $G$ associated to $\chi$ as $(\AA-V(B_\chi))/ G$. With our grading, there are essentially three different GIT quotients $Y,Y^+,Z$, corresponding to the characters in the three chambers $\RR_{>0}(\begin{smallmatrix}1\\0\end{smallmatrix})+\RR_{>0}(\begin{smallmatrix}0\\1\end{smallmatrix})$, $\RR_{>0}(\begin{smallmatrix}0\\1\end{smallmatrix})$, $\RR_{>0}(\begin{smallmatrix}1\\0\end{smallmatrix})+\RR_{>0}(\begin{smallmatrix}-1\\e\end{smallmatrix})$ respectively. These correspond to the three irrelevant ideals $B=(x_0,x_1)\cap (y_0,\ldots,y_n,z_1,\ldots,z_d)$, $B^+=(x_0,x_1,y_0,\ldots,y_n)\cap (z_1,\ldots,z_d)$ and $B\cap B^+$
and fit into the following diagram:

\begin{equation}\label{flipdiagram}
\xymatrix{
& Y \ar@{->}[rd]_{\nu} \ar@{->}[ld]_x  \ar@{-->}[rr]^\phi& &Y^+\ar@{->}[ld]^{\nu^+} \ar@{->}[rd]^z\\
\PP^1& & Z & & \PP^{d-1}}
\end{equation}The hypersurface $X$ (resp. $X^+$) can be embedded in $Y$ (resp. $Y^+$) as a complete intersection defined by the $d+1$ equations 
\begin{equation}\label{equationsforX}
f_0+x_1z_1=0,f_1-x_0z_1+x_1z_2=0,\ldots, f_{d-1}-x_0z_{d-1}+x_1z_d=0,f_d-x_0z_d=0.\end{equation}It is straightforward to check that this $\phi$ restricts to the birational map constructed earlier.\end{remark}

\section{Examples}

\subsection{Bidegree $(d,1)$.} Let $X\subset \PP^1\times \PP^n$ be defined by a bihomogeneous form of degree $(d,1)$. The first projection gives $X$ the structure of a $\PP^{n-1}$-bundle over $\PP^1$. In this case, $X$ is a toric variety (hence a Mori dream space) and its birational geometry is well-known \cite{miles}. $X$ has two contractions: One given by the first projection and the other either flipping or contracting depending on $d$.

Let us consider the case where $d\le n$ and $X$ is general. Then it is straightforward to check that the hypersurface is isomorphic to $\PP(\E)$ where $\E= \O_{\PP^1}^{n-d}\oplus \O_{\PP^1}^{d}(1)$. Moreover, the toric variety $Y$ coincides with the projective bundle $\PP(\O^n\+\O(1)^d)$. Under this identification we have $H_1\sim \pi^*\O_{\PP^1}(1)$ and $H_2\sim \O_{\PP(\E)}(1)$, where $\pi:\PP(\E)\to \PP^1$ is the projection map.

The vector bundle $\E$ is generated by the $n+d$ sections $u_{ij}=x_iz_j$ for $i=0,1, j=1,\ldots, d$ and $u_1=z_{d+1}\ldots,u_{n-d}=z_n$ and these sections define an embedding of $X$ inside $\PP^1\times \PP^{n+d-1}$ with defining equations given by the minors of the matrix
$$
\begin{pmatrix}
x_0 &u_{01} & \ldots & u_{0d}\\
x_1 &u_{11} & \ldots & u_{1d}\\
\end{pmatrix}.
$$From this we see that the second projection is birational and the image $Z$ is a cone over the Segre embedding of $\PP^1\times \PP^{d-1}$ in $\PP^{2d-1}$. In fact, from the defining equations we find that $X$ is the blow-up of $Z$ along the ideal $(u_{01},u_{11})$. Blowing up $Z$ along the other ruling gives the other birational model $X^+$.

For $d=2$ and $n=3$, this construction gives the Atiyah flop, where the variety $Z$ above is the quadric cone in $\PP^4$ and $X\to Z$ and $X^+\to Z$ are its two small resolutions.

\subsection{Bidegree $(1,e)$.} If $X$ is defined by a form $f=x_0f_0+x_1f_1$, the second projection contracts the divisor $eH_2-H_1$ and realizes $X$ as a blow-up of $\PP^n$ along the codimension 2 subvariety $Z=\{f_0=f_1=0\}$. The Cox ring of $X$ is isomorphic to $k[x_0,x_1,y_0,\ldots,y_n,z]/(zx_0+f_1,zx_1-f_0)$ and the nef cone is spanned by $H_1$ and $H_2$. On $X$ every movable divisor is nef, as the exceptional divisor is the only possible base-locus of an effective divisor.

\subsection{Bidegree $(2,e)$.} For hypersurfaces $X$ of bidegree $(2,e)$ in $\pn{n}$, the symmetry of the matrices $A$ and $B$ above show that the birational model $X^+$ is actually isomorphic to $X$. We explain this fact as follows. 

Suppose that $X$ is defined by $f=x_0^2f_0+x_0x_1f_1+x_1^2f_2=0$ in $\PP^1\times \PP^n$. The second projection $\pr_2:X\to \PP^n$ is generically 2:1, but it contracts the codimension $2$ locus given by $W=\{f_0=f_1=f_2=0\}$ which is a union of rational curves. Let $\tau:X\to Z$ be the Stein factorization of $\pr_2$. Explicitly, $Z$ is the double cover of $\PP^n$ branched over the divisor given by $D=\{f_1^2-4f_0f_2=0\}\subset \PP^n$. Let $\sigma:Z\to Z$ be the  involution that interchanges the sheets of the double cover. $\sigma$ induces a birational pseudoautomorphism of $X$ defined outside $W$. Using this description, it is easy to show that $\sigma^*H_1+H_1=eH_2$ and $\sigma$ is the $-H_1$-flip of $\tau$. This recovers the decomposition of the movable cone from section 2.

\section{Surfaces in $\pn{2}$}\label{surfaces}
Let $X$ be a very general surface in $\PP^1\times \PP^2$ of bidegree $(d,e)$. Much of the theory from the previous sections can be used to study the birational structure of $X$, but some care must be taken because the Picard group of $X$ might be larger than $\ZZ H_1\+\ZZ H_2$. However, the Noether-Lefschetz theorem of \cite{RS09} says that when $X$ is very general in its linear system and $K_{\pn{2}}\otimes \O_{\pn{2}}(X)$ is globally generated, then we have $\Pic(X)=\Pic(\pn{2})$. This is the case if and only if  $d\ge 2$ and $e\ge 3$. For the remaining cases, we  can proceed by a case-by-case analysis.

$d=1$. Here the situation is drastically different than that of hypersurfaces in higher dimension, because hypersurfaces of bidegree $(1,e)$ hypersurfaces are usually \emph{not} Mori dream spaces. In fact, very general hypersurfaces of bidegree $(1,e)$ can be described as the blow-up of $\PP^2$ along the $e^2$ intersection points of two very general degree $d$ curves. This is known to have infinitely many $(-1)$-curves for $e\ge 3$, so their effective cones of divisors are not rational polyhedral. In these cases the rank of the Picard group is $e^2+1$. For $e=1,2$, they are del Pezzo surfaces and hence Mori dream spaces.

$d=2$. Very general hypersurfaces $X$ of bidegree $(2,e)$ in $\pn{2}$ are Mori dream spaces. Indeed, if $e=2$, $X$ is a del Pezzo surface of degree $4$, which is a Mori dream space with Picard number 6. When $e\ge 3$, the Noether-Lefschetz theorem quoted above gives that $\Pic(X)\simeq \Pic(\pn{2})$. In this case an analysis similar to that in section 2 gives that the nef cone is spanned by $H_1$ and $eH_2-H_1$ and equals the effective cone. Moreover, $\Cox(X)$ has a presentation as in \eqref{coxringpres}.

$e=1$. As before, $X$ is a projective bundle over $\PP^1$, that is, $X$ is a Hirzebruch surface. %

When $d\ge 3$ and $e\ge 2$, a very general surface of bidegree $(d,e)$ is not a Mori dream space. We postpone the proof of this claim to  the next section.

\section{Non-Mori dream space hypersurfaces}\label{nonMDS}

In this section we give examples of bidegree $(d,e)$ hypersurfaces in $\PP^1\times \PP^n$ which do not have a closed effective cone and hence are not Mori dream spaces. From the the previous sections we may restrict to the cases where $d\ge n+1$ and $e\ge 2$.

Consider a `hypersurface' $C$ of bidegree $(2,2)$ in $\PP^1\times \PP^1$. $C$ is an elliptic curve, so $\Pic(C)$ is too big for $C$ to be a Mori dream space. However, it still makes sense to ask whether the subalgebra of $\Cox(C)$ given by 
$$
\Cox(H_1,H_2)=\bigoplus_{a,b}H^0(C,aH_1+bH_2)
$$is finitely generated. It turns out that it is not, at least for $C$ very general in its linear system. Here is the reason: Pick two degree two line bundles $D_1,D_2$ on the elliptic curve $C$ such that $D_1-D_2$ represents a non-torsion point on $\Pic^0(C)$. Then $D_1$ and $D_2$ determine two morphisms $f,g:C\to \PP^1$, hence a morphism $F=(f\times g):C\to \PP^1\times \PP^1$. This is an embedding and the image has bidegree $(2,2)$. However, the line bundle $L=H_2-H_1$ has no effective multiple by our choice of $D_1,D_2$. However, the line bundle $mL+H_1$  has positive degree and so {\em is} effective for any $m\ge 0$. Hence $\Cox(H_1,H_2)$ is not finitely generated.
 
 For the proof of the last part of Theorem \ref{decones}, we will use a variation on this idea. We will consider a projective bundle $Y=\PP(\E)$ of a semistable vector bundle over an elliptic curve $C$ and construct a generically finite morphism from $Y$ to a bidegree $(d,e)$-hypersurface $X_0$ in $\PP^1\times \PP^n$. We will do this in a way, so that the line bundle on $\PP^1\times \PP^n$ restricting to an extremal ray on a very general hypersurface pulls back to a line bundle with no effective multiple on $Y$. By semicontinuity, this will imply that the extremal ray of the nef cone of a general divisor is not semiample, since it has no effective multiple. Hence a very general bidegree $(d,e)$ hypersurface is not a Mori dream space.

We first need the following lemma which gives a bound for the effective cone of a very general hypersurface in $\pn{n}$. It essentially says that the class $neH_2-dH_1$, which has top-self-intersection 0, is pseudoeffective. 
\begin{lemma}\label{subconesss}
Let $X$ be a hypersurface of bidegree $(d,e)$ in $\PP^1\times \PP^n$. Then the $\eff(X)$ contains the subcone
\begin{equation}\label{constraints}
\RR_{> 0}H_1+\RR_{>0}(neH_2-dH_1)
\end{equation}
\end{lemma}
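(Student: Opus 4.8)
The plan is to exhibit, for every hypersurface $X$ of bidegree $(d,e)$, an explicit effective divisor whose class is $neH_2 - dH_1$ (or a positive multiple), together with the obvious effective divisor $H_1$; since the effective cone is convex, these two rays and everything between them lie in $\eff(X)$, giving the claimed subcone. The natural candidate for a divisor in the class $neH_2 - dH_1$ comes from the second projection $\pr_2\colon X \to \PP^n$. Pick a general line $\ell \subset \PP^n$; its preimage $\pr_2^{-1}(\ell)$ is a hypersurface of bidegree $(d,e)$ in $\PP^1 \times \ell = \PP^1 \times \PP^1$, i.e. a curve, and it maps $d$-to-$1$ onto $\ell$. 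So I cannot directly take a fiber. Instead, the idea is to consider the divisor on $X$ cut out by a suitable section, and track its class.

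Concretely, I would argue as follows. Restrict attention to the surface $S = \pr_2^{-1}(\ell) \subset \PP^1 \times \ell$ for $\ell \subset \PP^n$ a general $\PP^1$; for general $X$ this $S$ is a smooth curve (an honest bidegree $(d,e)$ curve in $\PP^1 \times \PP^1$, of genus $(d-1)(e-1)$), but for the cone statement I want a divisor on $X$ itself, so I should instead take $\ell' \subset \PP^n$ a general $\PP^{n-1}$ — no, that has the wrong codimension either. Let me restate the real plan: take the fiber $X_p = \pr_1^{-1}(p)$ of the first projection over a point $p \in \PP^1$; this is the hypersurface $\{f_p = 0\} \subset \PP^n$ of degree $e$, so its class is $eH_2$ restricted, i.e. $X_p \sim eH_2$ is not what we want. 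The class $neH_2 - dH_1$ has to be built from the ramification/branch structure of $\pr_2$. Indeed $\pr_2$ is finite of degree $d$ over its image, with branch divisor in $\PP^n$ of degree related to $e$ and $d$; the \emph{ramification divisor} $R$ on $X$ is effective, and by Riemann–Hurwitz-type bookkeeping (comparing $K_X$ with $\pr_2^* K_{\PP^n}$ plus $R$, using $K_X = (d-2)H_1 + (e - n - 1)H_2$ and $\pr_2^* K_{\PP^n} = -(n+1)H_2$) one computes $R \sim (d-2)H_1 + (e\cdot\text{something})H_2$. This gives one effective class; combining with $H_1$ and $eH_2$ (the fiber class, also effective) and taking positive combinations should produce $neH_2 - dH_1$ on the boundary. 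I expect the cleanest route is to write down a specific effective divisor: the closure of the locus where $\pr_2$ fails to be étale, or the pullback of a general hyperplane intersected appropriately, and just verify its bidegree by intersecting with the two rulings of a general $\PP^1\times\PP^1 = \pr_2^{-1}(\ell)$.

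The key steps, in order: (1) identify an explicit effective divisor $D$ on $X$ arising from the geometry of $\pr_2$ — most likely the ramification divisor $R$ of $\pr_2\colon X \dashrightarrow \PP^n$, or, if $\pr_2$ is not generically finite for small $d$, handle that case separately using that $eH_2$ and $H_1$ already span enough; (2) compute $[D] = \alpha H_1 + \beta H_2$ by restricting to $S = \pr_2^{-1}(\ell) \cong \PP^1\times\PP^1$ for general $\ell \cong \PP^1 \subset \PP^n$ of the right dimension and reading off intersection numbers with the two ruling classes, or equivalently by the adjunction formula for $X \subset \PP^1\times\PP^n$; (3) check that $[D]$, together with $H_1$ and the fiber class $eH_2$, generates under $\RR_{\geq 0}$-combinations a cone containing $\RR_{>0}H_1 + \RR_{>0}(neH_2 - dH_1)$ — i.e. that $neH_2 - dH_1$ lies in the convex hull of the rays of these effective classes; (4) conclude by convexity of $\eff(X)$ that the open subcone \eqref{constraints} is contained in $\eff(X)$. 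Since the statement is about all hypersurfaces, not just general ones, step (2) must be done in a way insensitive to the singularities of $X$; using the ambient adjunction/excess-intersection bookkeeping rather than smoothness of $X$ is the safe choice.

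The main obstacle I anticipate is step (2): pinning down exactly which effective divisor has class $neH_2 - dH_1$ and justifying the class computation uniformly, especially when $X$ or $X^+$ is singular and when $d > n$ so that $\pr_2\colon X \to \PP^n$ is generically finite of degree $d$ with a genuinely nontrivial (and possibly non-reduced or reducible) ramification locus. A clean alternative that sidesteps ramification: the hypersurface $X^+ \subset \PP^{d-1}\times\PP^n$ from Section 2 carries the pullback class $eH_2 - H_1 = \phi^*\O_{X^+}(1,0)$, and $\O_{X^+}(1,0)$ defines the contraction to $\PP^{d-1}$; pulling back a general hyperplane from $\PP^{d-1}$ and then taking its image back on $X$ (which is an isomorphism in codimension $d$) yields an effective divisor in class $eH_2 - H_1$, but that only gives the weaker cone $\RR_{>0}H_1 + \RR_{>0}(eH_2 - H_1)$, not the sharper $neH_2 - dH_1$. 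So the sharper bound genuinely requires the ramification-divisor input, and making that rigorous for arbitrary (not necessarily general) $X$ is where the work lies; I would reduce to the general case by semicontinuity of effectivity of a fixed divisor class in a flat family, which is legitimate since we only need containment of a \emph{closed} subcone boundary ray and $\eff$ can only shrink, not grow — actually it is the reverse, so I would instead prove it for general $X$ first and then note that $neH_2 - dH_1$ is effective on \emph{every} member because the specific divisor (zero locus of a specific section, e.g. built from the minors of $B$) is defined over the whole family.
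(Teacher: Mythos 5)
Your strategy has a fatal flaw: you are trying to exhibit an effective divisor whose class is $neH_2-dH_1$ (or a positive multiple) and then conclude by convexity, but such a divisor does not exist in general. Indeed, the entire point of Section 5 of this paper is that for a very general $X$ with $d\ge n+1$ and $e\ge 2$, \emph{no} positive multiple of $neH_2-dH_1$ is effective (Lemma \ref{noeffectivemultiple} and the theorem following it); this is exactly why such $X$ fails to be a Mori dream space. The lemma you are proving deliberately asserts only that the \emph{open} cone $\RR_{>0}H_1+\RR_{>0}(neH_2-dH_1)$ lies in $\eff(X)$ --- the bounding ray $neH_2-dH_1$ is pseudoeffective (it has top self-intersection $0$) but in general not $\QQ$-effective, so no convexity argument from two effective boundary classes can work. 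Your concrete candidate also fails on its own terms: by the adjunction you quote, $K_X=(d-2)H_1+(e-n-1)H_2$ and $\pr_2^*K_{\PP^n}=-(n+1)H_2$, so the ramification divisor of $\pr_2$ has class $R=(d-2)H_1+eH_2$. Both coefficients are nonnegative, so $R$ lies in the cone already spanned by $H_1$ and $H_2$ and gives no access to classes with negative $H_1$-coefficient. The same is true of the fiber class $eH_2$ and of $H_1$, so nothing in your list of effective divisors can generate a cone reaching $neH_2-dH_1$.

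The correct argument, and the one the paper uses, avoids producing any divisor on the boundary: for $L=aH_1+bH_2$ strictly inside the cone one computes $L^n=b^{n-1}(bd+aen)>0$, and the exact sequence $0\to\O_{\pn{n}}(a-d,b-e)\to\O_{\pn{n}}(a,b)\to L\to 0$ together with the K\"unneth vanishing $H^i(\O_{\pn{n}}(-x,y))=0$ for $i>1$, $x,y>0$, gives $H^i(X,mL)=0$ for all $i\ge 2$ and $m$ large. Hence $h^0(X,mL)\ge\chi(X,mL)=m^nL^n/n!+\cdots>0$ for $m\gg 0$, so $L$ is big. This proves effectivity (indeed bigness) of every interior class at once, works for an arbitrary hypersurface $X$ without any genericity or smoothness hypothesis, and never requires the boundary ray itself to carry a section. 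If you want to salvage your write-up, you should discard steps (1)--(3) entirely and replace them with this asymptotic Riemann--Roch computation.
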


\begin{proof}
Let $L=\O_X(aH_1+bH_2)$ be a line bundle in \eqref{constraints}. We have $L^n=b^{n-1}(bd+aen)>0$. It suffices to show that $L$ is big in the case $a<0$ and $b>0$. By K\"unneth, $H^i(\O_{\pn{n}}(-x,y))=0$ for all $i>1$ and $x,y>0$. So from the exact sequence $$0\to \O_{\pn{n}}(a-d,b-e)\to \O_{\pn{n}}(a,b) \to \L\to 0$$we see that $H^i(Y,mL)=0$ for $i>1$ for $m$ large. Hence $h^0(X,\O_X(mL))\ge \chi(\O_X(mL))=m^nL^n/n!+\ldots$ which is positive for $m$ large. Hence $L$ is big.\end{proof}
The line bundles in this cone correspond exactly to the line bundles $L$ such that $L^n>0$ and $L^{n-1}\cdot H_1>0$. The two divisors $H_1$ and $neH_2-dH_1$ will in fact turn out to be extremal in the effective cone, so generically there exist no divisors of negative top self-intersection on $X$.

\subsection{Construction of the special hypersurfaces.}
Let $C$ be a smooth elliptic curve. By results of Atiyah \cite{Ati57}, $C$ has a semistable rank $r=n$ vector bundle $\E$ of degree $d>n$. Let $Y=\PP(\E)$ denote the variety of hyperplanes in $\E$ with its projection $\pi:Y\to C$. 

An essential point of the construction is defining two morphisms $f:Y\to \PP^1$ and $g:Y\to \PP^n$, so that the induced map $F=f\times g:Y\to \PP^1\times \PP^n$ is birational onto its image, which is a hypersurface of bidegree $(d,e)$. Choose a degree $e$ morphism $p:C\to \PP^1$ (here we are using the fact that $e\ge 2$). Let $f:Y\to \PP^1$ be the composition $f=p\circ \pi$. The generic fiber of $f:Y\to\PP^1$ consists of $e$ distinct fibers of $\pi$. Let $L_1=f^*\O_{\PP^1}(1)$ and $L_2=\O_Y(1)$. Here $L_1\cdot L_2^{n-1}=e$ and $L_2^n=d$.

Note that the choice of the morphism $p:C\to \PP^1$ amounts to choosing a degree $e$ line bundle on $C$ along with two global sections. In this sense, we may talk about $p$ being `general' and `very general' with respect to these data.

\begin{lemma}
The line bundles $L_1$ and $L_2$ are base-point free and $L_2$ is ample.
\end{lemma}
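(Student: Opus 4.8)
The plan is to establish the two assertions separately, both coming from standard facts about projective bundles.

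**Base-point freeness.** First I would show $L_2 = \O_Y(1)$ is base-point free. The key point is that $\E$ is globally generated: since $\E$ is semistable of positive degree $d > n = \rank\E$ on an elliptic curve, its slope $\mu(\E) = d/n > 1$, and a semistable bundle of slope $> 2g-1 = 1$ on an elliptic curve is globally generated (indeed, $\E$ has no higher cohomology after twisting and $H^1(C, \E \otimes \O_C(-x)) = 0$ for every point $x$ because $\E \otimes \O_C(-x)$ is semistable of slope $> 0 = 2g-2$). Global generation of $\E$ is exactly the statement that $\O_Y(1)$ is base-point free (in the "variety of hyperplanes" normalization, global sections of $\O_Y(1)$ correspond to sections of $\E$, and the evaluation map being surjective on $Y$ is the surjectivity of $H^0(C,\E)\otimes\O_C \to \E$). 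For $L_1 = f^*\O_{\PP^1}(1) = \pi^* p^* \O_{\PP^1}(1)$: this is the pullback of a very ample (hence base-point free) bundle on $\PP^1$ under the morphism $f = p \circ \pi$, so it is automatically base-point free.

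**Ampleness of $L_2$.** Here I would invoke the criterion that $\O_{\PP(\E)}(1)$ is ample if and only if $\E$ is an ample vector bundle (Hartshorne). On an elliptic curve, a vector bundle is ample if and only if every quotient bundle has positive degree; for a \emph{semistable} bundle $\E$, every quotient $\E \twoheadrightarrow \mathscr Q$ satisfies $\mu(\mathscr Q) \ge \mu(\E) = d/n > 0$, and since $\deg \mathscr Q = \mu(\mathscr Q) \cdot \rank \mathscr Q > 0$, the bundle $\E$ is ample. Alternatively, and perhaps more cleanly, one notes that an ample bundle is in particular one all of whose symmetric powers are globally generated with $\O_{\Sym^k \E}(1)$ eventually very ample; but the quotient-degree criterion is the quickest route.

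**The main obstacle.** The genuinely delicate point is the global generation of a semistable bundle on the elliptic curve — i.e., that slope $>1$ suffices — since this is where the hypothesis $g(C) = 1$ and $d > n$ is really used, and it is the input that fails in general (e.g. on higher genus curves one needs slope $> 2g-1$, which is why Mumford's original construction needs more care). Everything else is a formal consequence of standard projective-bundle dictionary. I would therefore spend the bulk of the argument verifying that $\E$ and all the relevant twists $\E \otimes \O_C(-x)$ have vanishing $H^1$, which follows from semistability together with the fact that stability is preserved under tensoring by line bundles and that on an elliptic curve $H^1$ vanishes for semistable bundles of strictly positive slope (by Serre duality, dually $H^0$ vanishes for semistable bundles of strictly negative slope).
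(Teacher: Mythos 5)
Your proof is correct, and the base-point-freeness half runs along exactly the lines of the paper: $L_1$ is a pullback of a base-point-free bundle, and $L_2$ is base-point free because a semistable bundle on $C$ of degree $d>\rank(\E)(2g(C)-1)=n$ is globally generated (you usefully spell out the $H^1(C,\E\otimes\O_C(-x))=0$ step that the paper only cites). Where you diverge is the ampleness of $L_2$: you invoke Hartshorne's criterion that $\O_{\PP(\E)}(1)$ is ample iff $\E$ is ample, together with the fact that on a curve a bundle is ample iff every quotient has positive degree, which semistability of positive slope guarantees. The paper instead stays on $Y$: it notes that for $\E$ semistable every effective divisor on $\PP(\E)$ is nef (so the nef and pseudoeffective cones coincide), deduces that $L_2$ is nef and big from $L_2^n=d>0$, and concludes that $L_2$ lies in the interior of the nef cone, hence is ample. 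Your route is more self-contained at the level of vector bundles and avoids the cone-theoretic fact from Lazarsfeld; the paper's route has the advantage of setting up the identity $\nef(\PP(\E))=\overline{\eff}(\PP(\E))$, which is reused later in the section (e.g.\ in the corollary identifying the nef cone of the special hypersurface). Both are complete proofs.
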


\begin{proof}
$L_1$ is the pullback of a base-point free divisor on $C$. $L_2$ is base-point free because $\E$ is generated by sections: Indeed, this is true for any semistable vector bundle of degree $d> r(2g(C)-1)=n$. When $\E$ is semistable, any effective divisor on $\PP(\E)$ is nef \cite[1.5.A]{Laz04}. Moreover, $L_2$ is big, since it is nef and $L_2^2=d$, so it lies in the interior of the nef cone and hence is ample. 
\end{proof}

When $\E$ is semistable of degree $d> n(2g-1)=n$, we have $h^1(C,\E)=0$ and so by Riemann-Roch, $h^0(C,\E)=d$. We are assuming that $d\ge n+1$, so a choice of $n+1$ generic sections of $L_2$ defines a finite morphism $g:Y\to \PP^n$ of degree $d$.

\begin{lemma}\label{imagebidegree}
For $p:C\to \PP^1$ general, the image $X_0$ of the morphism $F=f\times g:Y\to \PP^1\times \PP^n$ is a hypersurface of bidegree $(d,e)$.
\end{lemma}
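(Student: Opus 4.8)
The plan is to compute the bidegree of $X_0$ directly from intersection numbers and then argue that $F$ is generically injective, so that $X_0$ is a hypersurface of the expected bidegree and $F$ has degree $1$ onto it. First I would observe that $X_0 \subset \PP^1 \times \PP^n$ is a divisor: since $g$ is finite and $\dim Y = n$, the image $F(Y)$ has dimension $n$ unless $F$ contracts a divisor onto a curve, which cannot happen because $g$ alone is already finite onto $\PP^n$ (so no positive-dimensional fibre of $F$ exists at all). Hence $X_0$ is an effective divisor of some bidegree $(a,b)$, i.e. $[X_0] = aH_1 + bH_2$ in $\Pic(\PP^1\times\PP^n)$. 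To pin down $a$ and $b$, let $\delta = \deg(F\colon Y \to X_0)$ and intersect: pulling back $H_1 = \pr_1^*\O(1)$ and $H_2 = \pr_2^*\O(1)$ to $Y$ gives $F^*H_1 = L_1$ and $F^*H_2 = L_2$, so the projection formula yields
$$
\delta \cdot (aH_1 + bH_2)\cdot H_1^{0}H_2^{n-1}\big|_{X_0}\ \text{-type identities}
$$
—concretely, $\delta\,( [X_0]\cdot H_2^{n-1}\cdot 1 ) = L_2^{n-1}\cdot L_1 = e$ computes $a$ (up to $\delta$), and $\delta\,([X_0]\cdot H_1 \cdot H_2^{n-1}) = L_1 \cdot L_2^{n-1}$, $\delta\,([X_0]\cdot H_2^n) = L_2^n = d$ computes $b$. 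Using $L_2^n = d$ and $L_1\cdot L_2^{n-1} = e$ from the construction, and $(aH_1+bH_2)\cdot H_2^n = a$ while $(aH_1+bH_2)\cdot H_1\cdot H_2^{n-1} = b$ inside $\PP^1\times\PP^n$, one gets $\delta a = d$ and $\delta b = e$. So once $\delta = 1$ is established, $(a,b) = (d,e)$ as claimed.

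The main work is therefore to show $\delta = 1$, i.e. that $F = f\times g$ is birational onto its image for general $p$. The idea is that although $g\colon Y \to \PP^n$ is $d$-to-$1$, adjoining the map $f = p\circ\pi$ separates the $d$ points of a general fibre of $g$. Fix a general point $x \in \PP^n$; its preimage $g^{-1}(x)$ is a reduced set of $d$ points $y_1,\dots,y_d \in Y$, lying on various fibres $\pi^{-1}(c_i)$ of $\pi\colon Y\to C$. I would first check that for general choice of the $n+1$ sections defining $g$, these $d$ points lie over $d$ \emph{distinct} points $c_1,\dots,c_d \in C$ (a general-position/monodromy argument: the monodromy of $g$ acts as the full symmetric group, or more simply, the incidence variety parametrizing $(x,$ two points of $g^{-1}(x)$ on the same $\pi$-fibre$)$ has dimension $< n$). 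Then $F(y_i) = (p(c_i), x)$, and since the $c_i$ are distinct and $p$ is a general degree-$e$ map $C\to\PP^1$, the values $p(c_1),\dots,p(c_d)$ are distinct — again because the locus in the space of degree-$e$ pencils for which some prescribed pair $c_i, c_j$ is identified is a proper closed condition, and we only need to avoid finitely many such conditions as $x$ ranges over a general point (using countability/constructibility, or better, spreading out: the bad locus in $\PP^n \times \{\text{pencils}\}$ has image of the wrong dimension). Hence $F$ is injective over a general $x$, so $\delta = 1$.

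I expect the genuine obstacle to be the second step — controlling the fibres of $g$ well enough to know the $d$ preimage points sit over distinct points of $C$, and then that a general $p$ keeps their $p$-images distinct. The cleanest route is probably to work on the universal family: consider the parameter space $V = \Gamma(C,\E)^{\oplus(n+1)}$ of choices for $g$, together with the parameter space $P$ of degree-$e$ maps $p$, and the incidence variety $W \subset Y\times Y \times V \times P$ of pairs $(y,y')$ with $y\neq y'$, $g(y)=g(y')$, $f(y)=f(y')$; a dimension count should show $W$ does not dominate $V\times P$, which gives the claim for general $(g,p)$ and hence (fixing such a general $g$, which also makes $g$ finite of degree $d$ as already noted) for general $p$. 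One should also record that the same genericity ensures $g$ is actually finite (not just generically finite) of degree exactly $d = \deg L_2^n$, which is where $L_2$ ample and $h^0 = d$ from the preceding lemmas get used. Throughout, "general $p$" is in the sense indicated after the construction, i.e. general in the family of degree-$e$ line bundles with two sections.
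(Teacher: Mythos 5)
Your proposal follows essentially the same route as the paper: finiteness of $g$ forces $X_0$ to be a hypersurface, the intersection numbers $L_2^n=d$ and $L_1\cdot L_2^{n-1}=e$ determine the bidegree via the projection formula once $\deg F=1$ is known, and birationality is proved by showing that $f$ separates the points of a general fibre of $g$. The one place you diverge is precisely the substep you flag as the genuine obstacle --- that the $d$ preimages of a general point of $\PP^n$ lie over $d$ distinct points of $C$ --- where you propose a monodromy or incidence-variety dimension count. The paper disposes of this in one line, with no genericity needed: $L_2$ restricts to $\O_{\PP^{n-1}}(1)$ on every fibre of $\pi$ (equivalently $L_2^{n-1}\cdot[\pi^{-1}(c)]=1$, so the preimage under $g$ of a general line is a section of $\pi$), hence $g$ is injective on each fibre of $\pi$ and two distinct points of a $g$-fibre can never share a $\pi$-fibre. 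Your sketched dimension count would presumably also work, but it is extra machinery for a fact that is automatic from the structure of $L_2$; the remaining step (a general degree-$e$ map $p$ separates the resulting distinct points of $C$) is handled in your write-up at the same level of detail as in the paper, and your explicit bookkeeping of the degree $\delta$ in the bidegree computation is if anything slightly more careful than the original.
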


\begin{proof}
First of all, $F$ is finite, since $g$ is finite, and so the image is a (possibly singular) hypersurface in $\PP^1\times \PP^n$. We will show that $F$ has degree one below. Granting this for the moment, it follows that the image $X_0$ has bidegree $(d,e)$. Indeed, note that the projections $\pr_1:Y\to \PP^1$ and $\pr_2:Y\to \PP^n$ factor through $f\times g$ and determine the bidegree uniquely: If $X_0$ has bidegree $(a,b)$, we have $a=X_0\cdot \pr_2^*\O_{\PP^n}(1)^n=L_2^n=d$ and $b=X_0\cdot \pr_2^*\O_{\PP^1}(1)\cdot \pr_2^*\O_{\PP^n}(1)^{n-1}=L_1\cdot L_2^{n-1}=e$.

We now show that $F$ is birational onto its image. Recall that the generic fiber of $f$ consists of $e$ disjoint fibers of $\pi$ and $g$ is finite of degree $d$.  Let $y\in Y$ be a general point and let $y'\in Y$ be a point so that  $y\neq y'$ and $g(y)=g(y')$. First we note that the $y$ and $y'$ lie in different fibers of $\pi$; this is because generically the preimage $g^{-1}(l)$ of a line $l\subset \PP^n$ through $g(y)$ is a section of $\pi$ (because $L_2^{n-1}\cdot \pi^{*}\O_C(p)=1$ for a fiber over $p\in C$). 

Suppose now that $y'\neq y$ is a point in $Y$ so that  $F(y)=F(y')$. By the above, we must have $\pi(y)\neq \pi(y')$. Now, we are choosing the degree $e$ map $p:C\to \PP^1$ generically, so we may assume that $\pi(y)$ and $\pi(y')$ map to different points on $\PP^1$. But $f=p\circ \pi$, so $y$ and $y'$ are separated by $f$, and consequently by $F$, a contradiction. In particular, $F$ is injective in a neighbourhood of $y$, and so it is birational onto its image. \end{proof}

\begin{lemma}\label{noeffectivemultiple}
If the morphism $p:C\to \PP^1$ is very general, the divisor $D=enL_2-dL_1$ on $Y=\PP(\E)$ does not have a positive integral multiple which is effective.
\end{lemma}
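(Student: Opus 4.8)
The plan is to analyze effective divisors in the class of $D = enL_2 - dL_1$ on $Y = \PP(\E)$ by pushing them down to the elliptic curve $C$ via $\pi$. Recall that $L_2 = \O_Y(1)$ and $L_1 = \pi^*p^*\O_{\PP^1}(1) = \pi^*(M)$ where $M$ is a degree $e$ line bundle on $C$, and that $\pi_*\O_Y(k) = \Sym^k \E$ for $k \ge 0$. So for any $m > 0$,
\begin{equation*}
H^0(Y, mD) = H^0(Y, men\, L_2 - md\, L_1) = H^0\bigl(C, (\Sym^{men}\E)\otimes M^{-md}\bigr).
\end{equation*}
Thus the claim reduces to showing that this group vanishes for all $m > 0$ when $p$ (equivalently $M$, together with its two sections) is chosen very generally.

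The key is semistability. Since $\E$ is semistable of degree $d$ and rank $n$ on the elliptic curve $C$ (so $g = 1$, and slopes behave additively under $\Sym$ and tensor), the bundle $\Sym^{men}\E$ is semistable — this uses the characteristic-zero fact, flagged in the Notation section, that symmetric powers of semistable bundles are semistable. Its rank is $\binom{men + n - 1}{n-1}$ and its degree is $men \cdot \rk(\Sym^{men}\E) \cdot \tfrac{d}{n} \cdot \tfrac{1}{men} \cdot$ ... more precisely $\deg \Sym^{k}\E = \tfrac{k}{n}\binom{k+n-1}{n-1}\,d$ (one standard way to see the slope: $\mu(\Sym^k \E) = k\,\mu(\E) = kd/n$). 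Hence $\Sym^{men}\E \otimes M^{-md}$ is semistable of slope
\begin{equation*}
\mu = men\cdot \frac{d}{n} - md\cdot e = med - med = 0.
\end{equation*}
A semistable bundle of slope $0$ on an elliptic curve has nonzero global sections only if it contains $\O_C$ as a subsheaf, equivalently (by semistability and slope $0$) as a subbundle, equivalently it has a quotient line bundle of degree $0$ appearing — in any case, $H^0$ is nonzero precisely when one of the indecomposable Atiyah summands of the bundle is of the form $F_r \otimes \O_C$ (the self-extension of $\O_C$) or, on an elliptic curve, when $\O_C$ is a direct summand after twisting. The upshot I want is: $H^0\bigl(C, \Sym^{men}\E \otimes M^{-md}\bigr) \ne 0$ forces a specific numerical/torsion coincidence on $M$. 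Concretely, the line bundles that can occur as rank-one subsheaves of a fixed slope-$0$ semistable bundle $\mathcal G = \Sym^{men}\E$ of degree $0$ form (by Atiyah's classification of semistable bundles on elliptic curves) a finite set of points in $\Pic^0(C)$; so $H^0(\mathcal G \otimes M^{-md}) \ne 0$ forces $M^{md}$ to lie in this finite set, i.e. $md\cdot[M]$ is one of finitely many points of $\Pic^0(C)$ (here $[M] \in \Pic^e(C)$ and I mean the natural translate).

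Therefore, for each $m$, the ``bad'' locus of $M$ (for which $mD$ could be effective) is contained in a finite set of points of $\Pic^e(C)$; taking the union over all $m > 0$ gives a countable set. Choosing $p: C \to \PP^1$ \emph{very general} — in particular with $[M]$ avoiding this countable set, which is possible since the base field is uncountable — ensures $H^0(Y, mD) = 0$ for every $m > 0$, proving the lemma. (One should double-check the edge issue that $\Sym^{men}\E$ itself might have a trivial summand independent of $M$; but even then, twisting by $M^{-md}$ with $md \ne 0$ and $[M]$ non-torsion of non-integer-related class kills such a summand's contribution unless $md[M] = 0$, again a countable condition.)

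The main obstacle I anticipate is making the step ``semistable slope-$0$ bundle $\mathcal G$ on $C$ with a nonzero section implies $md[M]$ lies in a controlled finite subset of $\Pic^0(C)$'' fully rigorous. The cleanest route is to invoke Atiyah's structure theorem: every semistable bundle of degree $0$ on an elliptic curve is a direct sum of bundles $F_{r_i} \otimes L_i$ with $L_i \in \Pic^0(C)$ and $F_{r_i}$ the unique unipotent bundle (iterated self-extension of $\O_C$), and such a bundle has sections iff some $L_i = \O_C$. Applying this to $\mathcal G \otimes M^{-md}$: its summands are $F_{r_i} \otimes (L_i \otimes M^{-md})$, so a section exists iff $M^{md} \cong L_i$ for some $i$, and the $L_i$ (coming from the fixed bundle $\Sym^{men}\E$ twisted appropriately) range over a finite set. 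I would cite \cite{Ati57} for this. A secondary point to handle carefully is that $\Sym^{men}\E$ need not be semistable of slope exactly giving degree $0$ after the twist unless I track $\deg \Sym^k\E$ correctly — so I will verify the degree formula $\deg\Sym^k\E = \binom{k+n-1}{n}\frac{(k+n-1)!}{\cdots}$... using the clean identity $\mu(\Sym^k\E) = k\mu(\E)$ together with $\rk \Sym^k\E = \binom{k+n-1}{n-1}$, which makes the slope computation above transparent.
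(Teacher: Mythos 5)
Your proposal is correct and follows essentially the same route as the paper: push forward by $\pi$ to identify $H^0(Y,mD)$ with $H^0(C,\Sym^{men}\E\otimes M^{-md})$, observe that this bundle is semistable of slope $0$ (using semistability of symmetric powers in characteristic zero), and conclude vanishing for very general $M$ because the bad twists form a countable set. The only difference is in the final step, where you invoke Atiyah's classification of semistable bundles on an elliptic curve, while the paper proves a standalone lemma (valid on any curve of positive genus) by induction on rank via the Jordan--H\"older filtration; also note the minor imprecision that $\Sym^{men}\E$ itself has slope $med$, not $0$, so Atiyah's slope-$0$ decomposition should be applied to the twisted bundle.
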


\begin{proof}
We need to show that for each $m>0$,
$$
H^0(Y,\O_Y(mD))=H^0(C,\pi_*\O_Y(mD))=H^0(C,S^{enm}\E\otimes \O_C(-dm p^*\O_{\PP^1}(1)))=0.
$$Note that $S^{emn}\E$ is a semistable vector bundle of rank $r={emn+n-1 \choose n-1}$ and degree $edmr$ and so $S^{emn}\E\otimes \O_C(-dm p^*\O_{\PP^1}(1)))$ is semistable of degree 0. If $p$ is very general, then these vector bundles do not have any global sections by the lemma below.
\end{proof}

\begin{lemma}\label{nosections}
Let $\E$ be a semistable vector bundle of rank $r$ and degree $0$ on a curve of positive genus. Then for a line bundle $L$ defining a general point in $Pic^0(C)$, we have $H^0(\E\otimes L)=0$.
\end{lemma}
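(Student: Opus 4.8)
The plan is to reduce the statement to the semistable case by an induction on the rank, using the Jordan--Hölder filtration, and then to handle the stable case of degree $0$ directly by a dimension count on the moduli space. First I would observe that it suffices to treat $\E$ \emph{semistable}; if $\E$ itself is only semistable but not stable, a Jordan--Hölder filtration expresses $\E$ as an iterated extension of stable bundles, each of slope $0$ (since all Jordan--Hölder factors of a semistable bundle share its slope). Tensoring the filtration by $L$ and taking the long exact sequences in cohomology, $H^0(\E\otimes L)$ vanishes as soon as $H^0(\E_i\otimes L)=0$ for every stable factor $\E_i$. Since there are only finitely many $\E_i$ (their number bounded by $r$), the condition ``$L$ avoids the bad locus for each $\E_i$'' still only excludes a proper closed subset of $\Pic^0(C)$, because each individual bad locus is proper closed by the stable case. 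So the heart of the matter is: for a \emph{stable} bundle $V$ of rank $r$ and degree $0$ on $C$ of genus $g\ge 1$, the locus $\{L\in\Pic^0(C):H^0(V\otimes L)\ne 0\}$ is a proper closed subset.

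For the stable case I would argue as follows. Closedness is semicontinuity of $h^0$ applied to the family $V\otimes \mathcal{P}$ over $C\times\Pic^0(C)$, where $\mathcal{P}$ is a Poincaré bundle; so the only issue is properness, i.e. that the locus is not all of $\Pic^0(C)$. Equivalently I must exhibit a single $L\in\Pic^0(C)$ with $H^0(V\otimes L)=0$. Since $V\otimes L$ again has rank $r$, degree $0$, and is stable, a nonzero section would give an injection $\O_C\hookrightarrow V\otimes L$ of a degree-$0$ subsheaf, hence a subsheaf of slope $\ge 0=\mu(V\otimes L)$; by stability this forces the saturation of $\O_C$ to be all of $V\otimes L$, impossible once $r\ge 2$, and for $r=1$ it forces $V\otimes L\simeq\O_C$, i.e. $L\simeq V^{-1}$, a single point. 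Thus for $r\ge 2$ stability already shows that \emph{no} $L$ makes $H^0(V\otimes L)\ne 0$ via an inclusion of $\O_C$ — but I should be careful: a section of $V\otimes L$ need only be a nonzero map $\O_C\to V\otimes L$, whose image is a torsion-free rank-$1$ subsheaf of some degree $\le 0$; its saturation $M$ has $\deg M\ge 0$, contradicting stability $\mu(M)<0$ unless $\deg M=0$ and $M$ destabilizes — again impossible for a stable $V\otimes L$. Hence in fact $H^0(V\otimes L)=0$ for \emph{every} $L\in\Pic^0(C)$ when $r\ge 2$, which is more than enough, and the rank-$1$ case excludes exactly one point. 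In either case the bad locus is proper closed.

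The main obstacle I anticipate is purely bookkeeping around the non-stable semistable case: one must check that the Jordan--Hölder factors genuinely all have slope $0$ (true, but worth stating), that the extension argument correctly propagates the vanishing through the long exact sequences, and that finitely many proper closed conditions on $L$ still cut out a proper closed subset — the last point is where uncountability of the field is irrelevant (finitely many conditions), in contrast to the ``very general'' hypotheses elsewhere in Section \ref{nonMDS}. One subtlety to flag: in Lemma \ref{noeffectivemultiple} the bundle $S^{emn}\E$ is semistable but typically not stable, and its rank grows with $m$, so the number of Jordan--Hölder factors is not bounded uniformly in $m$; however Lemma \ref{nosections} is applied for each fixed $m$ separately, and the countable intersection over all $m$ of proper closed subsets of $\Pic^0(C)$ is exactly what ``$p$ very general'' is designed to accommodate. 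So no uniformity is actually needed, and the proof goes through.
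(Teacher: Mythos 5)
Your proposal is correct and follows essentially the same route as the paper: handle the stable case by noting that a nonzero section would give $\O_C$ as a (slope-$0$) subsheaf contradicting stability, treat the line-bundle case by observing the bad locus is a single point, and reduce the strictly semistable case to these via the Jordan--H\"older filtration and the long exact sequence in cohomology. Your additional care about saturating the image of $\O_C\to V\otimes L$ and about finitely many proper closed conditions is a welcome (and correct) elaboration, but does not change the argument.
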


\begin{proof}
If $\E$ is a line bundle, then the statement holds, since the only effective line bundle of degree 0 is the trivial bundle and $\Pic^0(C)$ has dimension $>0$. So we may assume that rank $\E\ge 2$.

The statement is also true if $\E$ is stable, because in that case so is $\E\otimes L$, and if  $\E \otimes L$ has a section, then $\O$ is a subsheaf, contradicting the stability condition.

If $\E$ is strictly semistable, then the Jordan-H\"older filtration says that there is a semistable subbundle $\E'\subset \E$ such that $\E/\E'$ is a stable vector bundle and both $\E$ and $\E/\E'$ have the same slope as $\E$ (that is, 0). From this we get an exact sequence of degree $0$ vector bundles
$$
0\to \E'\to \E\to \E/\E'\to 0.
$$Tensoring this with $L$ and taking cohomology, the result follows by induction on the rank.
\end{proof}

We are ready to prove the main theorem of this section:

\begin{theorem}
Let $X$ be a very general hypersurface of $ \PP^1\times \PP^n$ of bidegree $(d,e)$ with $d\ge n+1$ and $e\ge 2$. Then the effective cone of $X$ is not closed. In particular, $X$ is not a Mori dream space.
\end{theorem}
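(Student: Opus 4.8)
The plan is to combine the geometric construction of the special hypersurfaces $X_0$ (built from projective bundles over elliptic curves) with a semicontinuity argument to transport the bad behaviour of $X_0$ to the very general member of the linear system. First I would fix, as in Lemma \ref{imagebidegree}, a general degree $e$ map $p\colon C\to\PP^1$ and a generic choice of $n+1$ sections of $L_2$, giving a birational finite morphism $F\colon Y=\PP(\E)\to X_0\subset\PP^1\times\PP^n$ onto a bidegree $(d,e)$ hypersurface. By construction $F^*H_1=L_1$ and $F^*H_2=L_2$, so the distinguished divisor class $D'=neH_2-dH_1$ on $\PP^1\times\PP^n$ pulls back to $D=enL_2-dL_1$ on $Y$. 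Choosing $p$ very general, Lemma \ref{noeffectivemultiple} then tells us that $D$ has no effective positive multiple on $Y$.

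Next I would deduce that $neH_2-dH_1$ has no effective positive multiple on $X_0$ itself. Since $F$ is finite and surjective, pullback $F^*\colon H^0(X_0,\O(m(neH_2-dH_1)))\to H^0(Y,\O(mD))$ is injective (a nonzero section of a line bundle on a variety pulls back to a nonzero section under a dominant morphism); as the target vanishes for all $m>0$ by Lemma \ref{noeffectivemultiple}, so does the source. Hence on $X_0$ the class $neH_2-dH_1$ is not pseudoeffective — or at least has no effective multiple — while Lemma \ref{subconesss} shows $\RR_{>0}H_1+\RR_{>0}(neH_2-dH_1)\subset\eff(X_0)$, so $neH_2-dH_1$ lies exactly on the boundary of $\overline{\eff(X_0)}$ and is therefore not in the (open, or at least multiple-of-effective) effective cone. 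Thus the effective cone of $X_0$ is not closed: $neH_2-dH_1\in\overline{\eff(X_0)}\setminus\eff(X_0)$.

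Finally I would run the semicontinuity step to pass from $X_0$ to the very general $X$. Consider the family of all bidegree $(d,e)$ hypersurfaces in $\PP^1\times\PP^n$ over the parameter space $B=\PP H^0(\PP^1\times\PP^n,\O(d,e))$, with its relative Picard group trivialized (for $\dim X\ge 3$ all these $X$ have $\Pic=\ZZ H_1\oplus\ZZ H_2$ by Grothendieck--Lefschetz, and for surfaces one invokes the Noether--Lefschetz theorem of \cite{RS09} for very general $X$). For each fixed $m>0$ the function $t\mapsto h^0(X_t,\O_{X_t}(m(neH_2-dH_1)))$ is upper semicontinuous on $B$, so the locus where it vanishes is open; since it is nonempty (it contains $[X_0]$), its complement is a proper closed subset $B_m\subsetneq B$. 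Taking the countable union $\bigcup_{m>0}B_m$ (here the uncountable base field is used), a very general $X$ avoids all of it, hence has $h^0(X,\O_X(m(neH_2-dH_1)))=0$ for every $m>0$: the class $neH_2-dH_1$ has no effective multiple on $X$. On the other hand Lemma \ref{subconesss} gives $\RR_{>0}H_1+\RR_{>0}(neH_2-dH_1)\subset\eff(X)$, so $neH_2-dH_1$ is a boundary point of $\overline{\eff(X)}$ not lying in $\eff(X)$, and $\eff(X)$ is not closed. Since a Mori dream space has closed (in fact rational polyhedral) effective cone, $X$ is not a Mori dream space. The main obstacle is the semicontinuity argument's bookkeeping: ensuring that $X_0$ genuinely occurs as a member of the family (i.e.\ that the image hypersurface $X_0$, though singular, still lies in $B$ and still has Picard rank $2$ computing its cohomology), and that the vanishing established on the normalization $Y$ descends correctly to $X_0$ — possibly requiring one to work with $\overline{X_0}$ or to replace "no effective multiple" by the slightly weaker "$neH_2-dH_1$ not big" and argue with volumes, using that $(neH_2-dH_1)^n=0$ on any such hypersurface.
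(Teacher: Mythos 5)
Your proposal follows the paper's own argument essentially verbatim: you exhibit the special hypersurface $X_0=F(\PP(\E))$, use Lemmas \ref{imagebidegree} and \ref{noeffectivemultiple} together with injectivity of $F^*$ on sections (the paper phrases this via $\O_{X_0}\hookrightarrow F_*\O_Y$, using that $X_0$ is reduced) to kill all multiples of $neH_2-dH_1$ on $X_0$, and then spread this to the very general member by upper semicontinuity of $h^0$ over a countable union of degeneracy loci, with Lemma \ref{subconesss} supplying pseudoeffectivity. This is correct and matches the paper's proof.
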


\begin{proof}
To prove this it is sufficient by semi-continuity of $\dim H^0$ to exhibit a single hypersurface $X_0$ of bidegree $(d,e)$ such that no multiple of the line bundle $L:=\O_{\pn{n}}(-d,ne)$ restricts to an effective divisor on $X_0$; then the same conclusion holds for a very general deformation of it. Since this line bundle is pseudoeffective on any hypersurface, the result follows.

We will let $X_0$ be the image of $Y=\PP(\E)$ under the morphism $F=f\times g$ defined earlier. By construction, the image $X_0$ is a hypersurface of bidegree $(d,e)$ such that the line bundle $F^*\left(L|_{X_0}\right)=\O_Y(enL_2-dL_1)$ has no effective multiple on $Y$ (Lemma \ref{noeffectivemultiple}). Note that ${X_0}$ is reduced (although it may be singular), hence the natural map $\O_{X_0}\to F_*\O_{Y}$ is injective, and we have for $m\ge 1$
\[
H^0({X_0},\left(mL|_{X_0}\right))\subseteq H^0(X_0,\left(m\L|_{X_0}\right)\otimes F_*\O_Y) = H^0(Y,F^*\left(mL|_{X_0}\right))=0\]Hence no multiple of $L$ is effective on ${X_0}$ and the proof is complete.
\end{proof}

\begin{corollary}
Let $X$ be a very general hypersurface $X\subset \PP^1\times \PP^n$ of bidegree $(d,e)$ with $d\ge n+1$ and $e\ge 2$ (and $d,e\ge 3$ in the case $n=2$). Then 
\begin{equation}\label{effectiveconesss}
\overline \eff(X)=\overline \mov(X)=\nef(X)=\RR_{\ge 0}H_1+\RR_{\ge 0}(neH_2-dH_1).
\end{equation}
\end{corollary}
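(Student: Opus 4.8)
Set $C:=\RR_{\ge 0}H_1+\RR_{\ge 0}(neH_2-dH_1)$ and identify $N^1(X)=\RR H_1\oplus\RR H_2$ (Grothendieck--Lefschetz for $n\ge3$, Noether--Lefschetz for $n=2$, which is where $d,e\ge3$ is used). The idea is to pin down $\overline{\Eff(X)}$ first and then trap $\nef(X)$ and $\overline{\mov(X)}$ between it and two soft lower bounds. First, Lemma \ref{subconesss} (valid for \emph{any} bidegree-$(d,e)$ hypersurface) gives $\RR_{>0}H_1+\RR_{>0}(neH_2-dH_1)\subseteq\Eff(X)$, so $C\subseteq\overline{\Eff(X)}$; and $H_1=\pr_1^*\O(1)|_X$ is nef. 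The only non-formal input will be the special hypersurface $X_0$ of Lemmas \ref{imagebidegree}--\ref{noeffectivemultiple}; the statements for a very general $X$ are then obtained from $X_0$ by semicontinuity.

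\textbf{The cones on $X_0$.} Recall the finite surjective morphism $F=f\times g\colon Y=\PP(\E)\to X_0$, with $\E$ semistable of rank $n$ and degree $d$ on an elliptic curve, satisfying $F^*H_1=L_1=e\phi$ and $F^*H_2=L_2=\xi:=\O_Y(1)$, where $\phi=\pi^*(\mathrm{pt})$. Since $\E$ is semistable, every effective divisor on $Y$ is nef, and $\phi^n=0=(\xi-\tfrac dn\phi)^n$ (using $\xi^n=\deg\E=d$, $\xi^{n-1}\phi=1$, $\phi^2=0$), whence $\overline{\Eff(Y)}=\nef(Y)=\RR_{\ge0}\phi+\RR_{\ge0}(\xi-\tfrac dn\phi)$. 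The map $F^*$ is injective on $N^1$ and sends pseudoeffective classes (in particular effective multiples of classes $aH_1+bH_2$) to pseudoeffective classes, and a short computation gives $(F^*)^{-1}\big(\nef(Y)\big)=\{aH_1+bH_2:\ b\ge0,\ nae+db\ge0\}=C$; together with $C\subseteq\overline{\Eff(X_0)}$ this yields $\overline{\Eff(X_0)}=C$. The same morphism gives $F^*(neH_2-dH_1)=en(\xi-\tfrac dn\phi)$, which is nef on $Y$, and since $F$ is finite and surjective this forces $neH_2-dH_1\in\nef(X_0)$; with $H_1$ nef and $\nef(X_0)\subseteq\overline{\Eff(X_0)}=C$ we conclude $\nef(X_0)=C$.

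\textbf{Passing to a very general $X$.} Work in the irreducible parameter space $B$ of bidegree-$(d,e)$ hypersurfaces, with $[X_0]\in B$. For a fixed lattice class $\alpha\notin C$ and fixed $m\ge1$, the set $\{[X]\in B:\ h^0(X,\O_X(m\alpha))>0\}$ is closed and cannot be all of $B$, since $\alpha\notin\overline{\Eff(X_0)}$. As $B$ is a variety over an uncountable field, it is not the union of the countably many such proper closed subsets; hence for very general $X$ no multiple of any $\alpha\notin C$ is effective, so $\overline{\Eff(X)}\subseteq C$, and therefore $\overline{\Eff(X)}=C$. A parallel argument with the (countably many) components of the relative Hilbert scheme of curves in the fibres of $\mathcal X\to B$ — each proper over $B$, with constant curve class on each component — shows that if a very general $X$ carried an irreducible curve $\gamma$ with $(neH_2-dH_1)\cdot\gamma<0$, then so would $X_0$, contradicting $neH_2-dH_1\in\nef(X_0)$. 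Hence $neH_2-dH_1$ is nef on very general $X$, so (with $H_1$ nef and $\nef(X)\subseteq\overline{\Eff(X)}=C$) $\nef(X)=C$. Finally $\mov(X)$ contains $\mathrm{int}(\nef X)=\mathrm{int}(C)$ (ample divisors are movable) and is contained in $\Eff(X)$, so $\overline{\mov(X)}=C$ as well.

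\textbf{Main obstacle.} The crux is the step on $X_0$, and in particular the two facts about $\PP(\E)$ it uses: that for $\E$ semistable on a curve one has $\overline{\Eff(\PP(\E))}=\nef(\PP(\E))$, and that its nontrivial extremal ray is $\xi-\mu(\E)\phi$. These are precisely where semistability — and, through the appearance of $S^k\E$ in Lemma \ref{noeffectivemultiple}, the characteristic-zero hypothesis — enters. The semicontinuity transfer in the last step is routine; one only needs to be mindful that $X_0$ may be singular (harmless, since only the Cartier classes $H_1,H_2$ are pulled back along $F$ and only their intersection numbers are used) and that "very general" carries its standard meaning of avoiding a countable union of proper closed subsets of $B$.
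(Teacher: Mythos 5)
Your proof is correct and follows essentially the same route as the paper: the lower bound from Lemma \ref{subconesss}, the determination of the cones on the special hypersurface $X_0$ via the finite cover $F\colon\PP(\E)\to X_0$ (where semistability gives $\overline{\eff}=\nef$), and the transfer to a very general $X$ by semicontinuity of $h^0$ together with the properness-of-Hilbert-scheme argument that the paper delegates to \cite[Lemma 4.1]{Mou12}. The only cosmetic difference is that you rule out effectivity of every lattice class outside the cone directly by degeneration to $X_0$, whereas the paper combines the lemma with the single non-effectivity statement of the theorem; both are valid.
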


\begin{proof}
By Lemma \ref{subconesss} we have that the pseudoeffective cone contains the cone on the right hand side of \eqref{effectiveconesss}, and so two cones coincide by the theorem. Moreover, $neH_2-dH_1$ is nef on the special hypersurface $X_0$ used in the proof of the theorem, since it pulls back to a nef divisor on $\PP(\E)$ via a finite surjective morphism. Moreover, on $X_0$, the pseudoeffective cone and the nef cone coincide, so by the argument of \cite[Lemma 4.1]{Mou12}, the same conclusion holds for a very general deformation $X$ of $X_0$.
\end{proof}

\begin{example}In dimension 2, $(3,2)$ is the first bidegree for which a very general hypersurface is not a Mori dream space. This variety is rational surface that is isomorphic to a blow-up of a Hirzebruch surface in 9 general points, so the Picard number is in fact 11.
\end{example}

\begin{example}\label{Mumford}
By the Noether-Lefschetz theorem, a very general surface of bidegree $(3,3)$ in $\PP^1\times \PP^2$ has Picard number 2. By the theorem, the line bundle $L=2H_2-H_1$ is nef, but is not semiample. In fact, $L$ is \emph{strictly nef}, in the sense that $\deg L|_C>0$ for every curve $C$ (since $L\cdot C=0$ implies $C\sim_\QQ L$, and $L$ is not $\QQ$-linearly equivalent to an effective divisor). This gives a simple counterpart of Mumford's example mentioned in the introduction.
\end{example}

\begin{example}
A very general hypersurface of bidegree $(4,2)$ in $\PP^1\times \PP^3$ can be viewed as a quadric surface bundle over $\PP^1$. It is therefore a rational threefold with Picard number 2, which is not a Mori dream space.
\end{example}

\begin{remark}[Relation to a conjecture of Keel]
Consider a surface $S$ defined over the field $k=\overline{\mathbb F_p}$ and a line bundle $L$ on $S$. In \cite{keel} Keel posed the problem whether $L$ pseudoeffective implies that it is $\QQ$-effective, that is, that some multiple of $L$ has a section. This is of course false over $\CC$, as we have seen. However, the proof using projective bundles over an elliptic curve fails over $\overline{\mathbb F_p}$, since every degree 0 line bundle is in fact torsion on $\Pic^0(E)$ (and thus is $\QQ$-effective). This raises the question

\bigskip

\noindent {\bf Question. }Let $k=\overline{\mathbb F_p}$ and consider a smooth hypersurface $S$ in $\PP_k^1\times \PP_k^2$ of large bidegree.  Does every pseudoeffective line bundle on $S$ have an effective multiple?
\end{remark}

\section{Hypersurfaces in products of several projective spaces}

It is not surprising that the picture does not become simpler when considering high-degree hypersurfaces in products of more projective spaces. For example, a variation of the previous argument using projective bundles over elliptic curves produces non-Mori dream hypersurfaces multidegree $(d_1,\ldots,d_k,e)$ in $(\PP^1)^k\times \PP^r$ for $d_i\ge r+1,e\ge 2$. In this section, we remark that for hypersurfaces $X$ in products of projective spaces with more than one $\PP^1$-factor, the situation becomes even more complicated. In particular, we don't expect Mori dream spaces, even for low degree hypersurfaces. The following example, which appears in the work of Kawamata \cite[Example 3.8]{Kaw97}, illustrates this already for a Calabi-Yau threefold.

Consider a smooth hypersurface of tridegree $(2,2,3)$ in $\PP=\PP^1\times \PP^1\times \PP^2$ defined by an equation $$f(x_i,y_i,z_i)=x_0^2f_0+x_0x_1f_1+x_1^2f_2$$ where $f_0,f_1,f_2$ are forms of tridegree $(0,2,3)$.  The projection $(\pr_1\times \pr_3):X\to \PP^1\times \PP^2$ contracts the codimension 2 locus $W=\{f_0=f_1=f_2=0\}$ which is a union of 54 rational curves. Taking the Stein factorization gives a small contraction $\phi:X\to Z$ where $Z$ is the double cover of $\PP^1\times \PP^2$ ramified over the divisor defined by $f_1^2-4f_0f_2=0$.  Note that $Z$ has a natural involution $\sigma':Z\to Z$, which switches the sheets of the covering. This determines a birational pseudoautomorphism $\sigma: X\dashrightarrow X$ defined outside $W$. In terms of $H_1,H_2,H_3$ it is not hard to show that $\sigma^*H_1+H_1=2H_2+3H_3$ and that $\sigma:X\dashrightarrow X$ is the $(-H_1)$-flip of $\phi$.

One can repeat the argument with the other contraction $(\pr_2\times \pr_3):X\to \pn{2}$ to get another pseudoautomorphism $\sigma'$ of $X$. Moreover, $\sigma$ and $\sigma'$ generate an infinite subgroup of the group of pseudoautomorphisms of $X$, $\PsAut(X)$. In fact, also the group $\PsAut(X)^*=\mbox{im}(\PsAut(X)\to GL(N^1(X))$ is infinite. In particular, $X$ is not a Mori dream space, because there are infinitely many non-isomorphic marked small $\QQ$-factorial modifications.

Using essentially the same method, one can show the following result:
\begin{proposition}\label{CY}
Let $X$ be a smooth Calabi-Yau hypersurface of dimension $\ge 3$ in $\PP=(\PP^1)^m \times \PP^{n_1}\times \cdots \times \PP^{n_k}$ where $n_1,\ldots,n_k\ge 2$ and let $H_i=\pr_i^*\O(1)$. Then the nef cone is given by $\nef(X)=\RR_{\ge 0} H_1+\RR_{\ge 0} H_2+\ldots+\RR_{\ge 0} H_{k+m}$. Moreover, the following hold:
\begin{enumerate}[(i)]
\item If $m=0$, then $\eff(X)=\mov(X)=\nef(X)=\eff(\PP)$ and $X$ is a Mori dream space.
\item If $m=1$, then the effective cone is strictly larger than that of $\PP$, and $X$ is a Mori dream space.
\item If $m>1$, then $X$ is not a Mori dream space. In fact the group $\PsAut(X)^*$ is infinite and the movable cone is not rational polyhedral.
\end{enumerate}
\end{proposition}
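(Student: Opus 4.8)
\textbf{Proof proposal for Proposition \ref{CY}.}

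The plan is to treat the nef cone first, then the three cases, reusing the flop construction from the $(2,2,3)$ example systematically. First I would establish $\nef(X)=\sum_{i}\RR_{\ge 0}H_i$. The inclusion $\nef(X)\subseteq \nef(\PP)$ is false in general for hypersurfaces, so instead I would argue directly: each $H_i$ is semiample (pulled back from a projection), hence nef, giving one inclusion. For the reverse I would show that if $D=\sum a_i H_i$ is nef then all $a_i\ge 0$, by exhibiting, for each factor, a curve class on which $D$ restricts to $a_i$ times a positive number and the other $H_j$ to zero: for a $\PP^{n_j}$-factor with $n_j\ge 2$, a line in that factor (lying on $X$, which exists since a general fibre of the projection forgetting that factor is a hypersurface of the right degree in a product still containing that $\PP^{n_j}$ and $n_j\ge 2$ forces lines on it — here one needs $\dim X\ge 3$ and a Lefschetz/connectedness input); for a $\PP^1$-factor, a fibre of the projection to the product of the remaining factors, which is a finite set of points, i.e.\ a multisection giving a genuine curve only after combining — more carefully, a curve contracted by the projection forgetting that $\PP^1$. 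The cleanest route is: the projections $\pr_I\colon X\to \PP_I$ (forgetting one factor at a time) are surjective with positive-dimensional generic fibre whenever the complementary hypersurface condition is nontrivial, so each $H_i$ spans an extremal ray because its orthogonal complement meets $\overline{\eff}(X)$; I would use that $\overline{\eff}(X)\supseteq \eff(\PP)=\sum \RR_{\ge0}H_i$ together with the pseudoautomorphism-generated rays to pin down nef as the dual.

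For (i), $m=0$: here $X$ is a Calabi-Yau hypersurface in a product of $\PP^{n_j}$ with all $n_j\ge 2$, so by the Lefschetz-type argument in the section "The Lefschetz theorem for Mori dream spaces" (the sequence $0\to H^0(\O(D-X))\to H^0(\O(D))\to H^0(\O_X(D))\to 0$ is exact because $H^1(\PP,L)=0$ for all line bundles $L$ when every factor has dimension $\ge 2$), the Cox ring of $X$ is the quotient $k[\text{all variables}]/(f)$, hence finitely generated, so $X$ is a Mori dream space, and the only contractions are the projections, forcing $\eff(X)=\mov(X)=\nef(X)=\eff(\PP)$. I would just cite that discussion verbatim.

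For (ii), $m=1$: I would run the $(2,2,3)$-type argument in general. Write the defining equation of $X$ as a polynomial of degree $d$ in the two $\PP^1$-coordinates $x_0,x_1$; the companion-matrix / flip picture of section 2 (or, when $d=2$, the double-cover picture of the $(2,2,3)$ example) applies fibrewise over $\PP^{n_1}\times\cdots\times\PP^{n_k}$ and produces a small birational modification $\phi\colon X\dashrightarrow X^+$ and the identity $\phi^*(\text{pullback of }\O(1)\text{ from the new }\PP^{d-1})=(\text{combination of the }H_j)-H_1$, which is on the boundary of $\eff(X)$ and not a combination of the $H_i$ with nonnegative $H_1$-coefficient; hence $\eff(X)$ is strictly larger than $\eff(\PP)$. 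That $X$ is still a Mori dream space should follow as in section 2 by computing the Cox ring via the Eagon--Northcott resolution of the relevant matrix and Mumford's multiplication-surjectivity criterion, or more cheaply by invoking that $X$ fibres over a lower-dimensional Mori dream space with Mori dream generic fibre and finitely many flips; I would cite the section-2 computation and note the argument is unchanged since only one $\PP^1$ is involved.

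For (iii), $m>1$: this is the substantive case and the main obstacle. The strategy is: for each pair consisting of one of the $\PP^1$-factors together with the product of all the $\PP^{n_j}$'s, the defining form has some positive degree $d$ in the coordinates of that $\PP^1$ (here I must check $d\ge 2$, which is where the Calabi-Yau condition $\sum$(degrees in $\PP^1_i$)$=2$ and the $\PP^{n_j}$-degrees being $n_j+1\ge 3$ come in — actually the Calabi-Yau condition forces each $\PP^1$-degree to be exactly $2$, so $d=2$ always and the double-cover/flop picture of the $(2,2,3)$ example applies directly); this yields a flop $\sigma_i\colon X\dashrightarrow X$ with $\sigma_i^*H_i+H_i=\sum_{j\ne i}(\text{positive multiple})H_j$, i.e.\ a reflection-like involution of $N^1(X)$ fixing the hyperplane $H_i^\perp$ and sending $H_i$ to $-H_i+(\text{stuff})$. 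The hard part will be showing the subgroup of $GL(N^1(X))$ generated by $\sigma_1^*,\ldots,\sigma_m^*$ is infinite: each $\sigma_i^*$ is a (non-orthogonal) reflection, and for $m\ge 2$ the product $\sigma_1^*\sigma_2^*$ has infinite order provided the relevant $2\times2$ block has trace of absolute value $>2$ or is a nontrivial unipotent — I would verify this by an explicit computation of the matrices in a basis adapted to $H_1,H_2$ and a third vector, showing the product is not quasi-unipotent (or is unipotent $\ne$ id), exactly as Kawamata does in \cite[Example 3.8]{Kaw97} for $m=2$; the general $m>1$ reduces to the $m=2$ sub-case by restricting to the span of two of the $H_i$'s. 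Infinitude of $\PsAut(X)^*$ then gives infinitely many non-isomorphic marked small $\QQ$-factorial modifications, contradicting condition (iii) in the definition of a Mori dream space, and the images $\sigma_w^*\nef(X)$ for $w$ in the infinite group tile $\mov(X)$ into infinitely many polyhedral pieces, so $\mov(X)$ cannot be rational polyhedral. I expect the bookkeeping of the flop (checking it is small, $\QQ$-factorial, and that the claimed linear relation on $N^1$ holds) and the matrix-order computation to be the only real work; everything else is citation of section 2, the Lefschetz discussion, and Kawamata's example.
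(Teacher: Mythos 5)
The main gap is in your treatment of the nef cone, which is the one place where the paper needs a substantive external input and where your proposal does not supply a working substitute. You correctly observe that $\nef(X)\subseteq\nef(\PP)$ is not automatic for hypersurfaces, and you correctly identify what would be needed for a direct argument: for each $\PP^1$-factor, a curve on $X$ contracted by the projection forgetting that factor (the general fibre of that projection is finite, since the Calabi--Yau condition makes $X$ a double cover of the complementary product, so such curves live only over the special locus where all three coefficient forms $f_0,f_1,f_2$ vanish). But you never establish that this locus is nonempty, and your fallback --- ``use $\overline{\eff}(X)\supseteq\eff(\PP)$ together with the pseudoautomorphism-generated rays to pin down nef as the dual'' --- is circular: the pseudoautomorphisms and the resulting cone decompositions are constructed downstream of knowing $\nef(X)$, not the other way around. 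The paper closes this in one line by citing Koll\'ar's theorem (the reference \cite{Bor89}) that for a smooth Fano $Y$ of dimension $\ge 4$ and smooth $X\in|-K_Y|$ the inclusion induces an isomorphism $\overline{NE}_1(X)\to\overline{NE}_1(Y)$; dualizing gives $\nef(X)=\sum_i\RR_{\ge0}H_i$ immediately. Your direct approach could be made to work (the triple intersection $[f_0][f_1][f_2]$ is nonzero on the base, which has dimension $\ge 3$), but as written the step is missing.

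For (ii) you also take a different and partly problematic route. The paper does not recompute a Cox ring: it uses that $X$ is a double cover of $\PP^{n_1}\times\cdots\times\PP^{n_k}$ (the CY condition forces degree $2$ in the $\PP^1$-variables, a point you notice only in (iii)), takes the induced involution $\sigma$, shows every divisor in a coordinate hyperplane $\e_i^\perp$ with $i>1$ fails to be big (via the ideal-sheaf sequence and vanishing of higher cohomology on $\PP$), concludes that $e_1^\perp$ is the unique supporting hyperplane of $\nef(X)$ containing big classes and is $\sigma^*$-invariant, and deduces $\eff(X)=\mov(X)=\nef(X)\cup\sigma^*\nef(X)$. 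Your proposed alternatives are either much heavier (redoing the Eagon--Northcott/Mumford computation of section 2 in the larger ambient space, where the required cohomology vanishings have to be rechecked) or invalid: fibring over a Mori dream space with Mori dream generic fibre does not imply being a Mori dream space, so that ``cheaper'' route cannot be invoked. Cases (i) and (iii) of your proposal match the paper's argument, and your plan for verifying infinitude of $\langle\sigma_1^*,\sigma_2^*\rangle$ by the explicit matrix computation from Kawamata's example is exactly what the paper's ``follows as in the example above'' amounts to.
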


\begin{proof}
The description of the nef cone follows from a more general theorem of Koll\'ar \cite{Bor89}, which states that if $Y$ is a smooth Fano variety of dimension at least $4$ and $X\in |-K_Y|$ is a smooth divisor, then the inclusion induces an isomorphism of the cones of curves $i_*: \overline{NE}_1(X)\to \overline{NE}_1(Y)$. Taking duals gives that the nef cone is the first quadrant $\sum_{j}\RR_{\ge 0}H_j$ in $N^1(X)$.

The case $(i)$ follows as in the first part of Theorem 1.1.

For $(ii)$, let $\sigma: X\dasharrow X$ be the pseudoautomorphism of $X$ obtained by viewing $X$ as a double cover of $\PP^{n_1}\times \cdots \times \PP^{n_k}$.   It is not hard to show that any divisor $D$ lying in the `coordinate planes' $\e_i^\perp=\sum_{j\neq i}\RR_{\ge 0}H_j$ is not big for any $i>1$. This follows for example by taking cohomology of the ideal sheaf sequence $0\to \O_\PP(D-X)\to \O_\PP(D)\to \O_X(D)\to 0$ and using the vanishing of higher cohomology of line bundles on $\PP$. Hence the only supporting hyperplane of $\nef(X)$ containing big divisors is $e_1^\perp$. This hyperplane is fixed under the involution $\sigma^*$, and so by applying $\sigma^*$ to $\nef(X)$, we see that any divisor in the boundary of $\nef(X)\cup\sigma^*\nef(X)$ is not big. Hence $\eff(X)=\mov(X)=\nef(X)\cup\sigma^*\nef(X)$ and so $X$ is a Mori dream space.

$(iii)$ follows as in the example above, noting that two $\PP^1$-factors give rise to two pseudoautomorphisms $\sigma,\sigma'$ generating an infinite subgroup of $\PsAut(X)^*$. 
\end{proof}

In \cite{CO11}, Cantat and Oguiso give a  detailed description of the cones of effective, movable, and nef divisors on hypersurfaces in $(\PP^1)^m$ of multidegree $(2,\ldots,2)$. In particular, they verify the Morrison-Kawamata cone conjecture for these hypersurfaces, which means that even though the movable cone itself fails to be polyhedral, it has a rational polyhedral fundamental domain under the action of $\PsAut(X)$ on $N^1(X)$. In fact, they show that $\PsAut(X)\simeq \ZZ/2\ZZ *\cdots *\ZZ/2\ZZ$, generated by the birational involutions above and that the fundamental domain is in fact the nef cone of $X$. It is likely that these statements generalize to the hypersurfaces in Proposition \ref{CY}.

\bibliographystyle{plain}

\textsc{Department of Pure Mathematics and Mathematical Statistics, University of Cambridge, Wilberforce Road, Cambridge CB3 0WB, UK}

{{\it Email:} \verb"J.C.Ottem@dpmms.cam.ac.uk"}

\end{document}